\def\thm@space@setup{%
\thm@preskip=\parskip \thm@postskip=0pt
}
\newcommand{\mcf}{\mathcal}
\newcommand{\mbb}{\mathbb}
\newcommand{\tpose}{^{\!\top\!}}
\renewcommand{\Re}{\mbb{R}}
\newcommand{\eqdef}{:=}
\newcommand{\norm}[1]{\left\| #1 \right\|}
\newcommand{\set}[2]{\left\{ #1\ \left| \ #2 \right. \right\}}
\newcommand{\interior}{\mathop{\operatorname{int}}}
\newcommand{\trace}{\mathop{\operatorname{tr}}}
\newcommand{\innerprod}[2]{\left\langle{#1},{#2}\right\rangle}
\newcommand{\sqtwo}[1]{\left\|{#1}\right\|^2}
\newcommand{\indexset}[1]{\llbracket {#1} \rrbracket}
\newcommand{\mat}{\text{mat}}
\newcommand{\diag}{\text{diag}}
\newcommand{\rank}{\text{rank}}
\newtheorem{thm}{Thoerem}[section]
\theoremstyle{remark}
\definecolor{BenchHighlight}{rgb}{0.8,0.8,0.9}
\title{\LARGE \textbf{A warmstarting technique for general conic optimization in interior point methods}}
\author[1]{Yuwen Chen}  
\author[2]{Paul Goulart}
\author[1]{Colin Jones}  
\affil[1]{School of Engineering, EPFL, Lausanne, Switzerland}
\affil[2]{Department of Engineering Science, University of Oxford,
Oxford, UK}
\date{\today}
\begin{document} 
\maketitle

 \begin{abstract}
We propose a novel warmstarting method for primal-dual interior point methods based on a smoothing operator that generates a starting point on the central path from the previous optimum. Compared to traditional approaches that prioritize minimizing infeasibility residuals, our method focuses on maintaining proximity to the central path. Computation of a smoothing operator is efficient and can be parallelized for conic constraints. We also prove that the residual of the smoothed starting point remains comparable to the one before the smoothing step. The numerical tests show that the proposed warmstarting strategy can reduce iteration numbers and computational time effectively across test problems. 
 \end{abstract}



\section{Introduction}
Conic optimization is a powerful generalization of linear programming that extends its capabilities by incorporating convex cones into the constraint structure~\cite{bv_2004}. While retaining linear equality constraints, conic formulations allow for more expressive inequality constraints through cones such as second-order cones, exponential cones, power cones and positive semidefinite cones~\cite{MOSEK,Clarabel}. This added flexibility makes conic optimization highly versatile, enabling it to effectively model uncertainty and risk in a wide range of applications, including risk measurement of portfolio optimization~\cite{Krokhmal2007}, the safety guarantees in control engineering~\cite{Boyd94} and robust model design in machine learning~\cite{Xu2009}. 

The primal-dual interior point method~\cite{Nesterov1999,Wright97} is a second-order method based on the Newton method and has been widely used for conic optimmization problems. It uses a barrier function to ensure that the iterates remain within the feasible region of a cone where the function penalizes solutions that approach the boundary of it. The self-scaled property~\cite{Nesterov1997} of a barrier function determines whether a cone is symmetric or not, and also affects the way we characterize the central path in the implementation of a primal-dual interior point method. The central path builds up the link between primal and dual variables with the centering parameter $\mu$ via the logarithmically homogeneous self-concordant barrier function. It has a lot of useful quantities that link different orders of derivatives given the cone degree $\nu$ and are detailed in~\cite{Nesterov1997}. The symmetric cones of the self-scaled property enables the use of Jordan algebra and an alternative form of the central path is used in practice~\cite{Nesterov1997,cvxopt}.

Warmstarting is a technique that can significantly reduce the number of iterations required and can save computational time. It is effective for parametric programming~\cite{MPT3} where the structure and dimensions of a problem remain consistent over time, with only slight variations in parameter values. When these parameters are updated, the optimal solution from the previous problem can be reused as the initial point for an optimization solver. Applications of this class include portfolio optimization~\cite{Krokhmal2007}, model predictive control~\cite{Borrelli:MPCbook}, and hyperparameter tuning for machine learning models~\cite{Bishop:MLbook}. First-order methods have no additional requirements over the intial point and warmstarting new problem from the previous optimum is directly applicable and is already supported in many first-order solvers like SCS~\cite{SCS} and OSQP~\cite{osqp}.

Compared to first-order methods, warmstarting interior point methods is not straightforward since interior point methods often struggle with starting points that are too close to the boundary of the feasible region, which can block the search direction and slow the convergence of interior point methods. Research into warmstarting interior point methods has been studied for many years~\cite{Yildirim2002,Gondzio2002,Gondzio2008} although most of them are for the case of linear programming or quadratic programming with linear constraints rather than general conic optimization problems. A tailored warmstarting strategy is also proposed for optimal control problems where the temporal causality imposes constraints on adjacent state and input variables~\cite{Shahzad2011}.

Warmstarting for general conic optimization in interior point methods is a relatively new research direction as conic optimization has become more popular in recent years. Skajaa et al.~\cite{Skajaa2013} proposed a warmstarting approach for homogeneous and self-dual interior point methods by taking the convex combination of the optimal solution of the previous problem and the default coldstarting point. An extension work~\cite{Cay2017} presented how to exploit the idea of convex combination within mixed integer second-order cone programming. However, the ratio of convex combination is chosen empirically without taking into account the residual level of the original optimal solution on the perturbed problem. 

In summary, the core idea for warmstarting an interior point method is to generate a starting point that keeps infeasibility violation small and is close to the central path. 

In the context of parametric programming, where the previous optimum can be leveraged for warmstarting, we proposed a method that meets both of these criteria. Previous warmstarting approaches for conic optimization~\cite{Skajaa2013} primarily focus on reducing feasibility residuals, placing less emphasis on maintaining proximity to the central path. Moreover, these methods are limited to symmetric cones that admit a Jordan algebra structure. On the contrary, our approach prioritizes proximity to the central path over feasibility satisfaction, guaranteeing large step size in the following interior point method while still keeping the feasibility residuals reasonably small. In addition, the proposed warmstarting strategy is also applicable to general cone constraints. 
The main contributions of our approach are outlined below:

\begin{enumerate}
	\item We propose the smoothing operator for general convex cones with valid logarithmically-homogeneous self-concordant barrier functions. The smoothing operator is a proximal operator that is decomposable with respect to each conic constraint, and the computation can be parallelized. 
	\item We propose a warmstarting strategy based on the smoothing operation. We prove that our warmstarting strategy can generate a starting point on the central path of primal-dual interior point methods.
	\item We prove that the residual of the starting point is at the same level as the previous optimum for nonnegative cones, second-order cones and positive semidefinite cones. The new error introduced by the smoothing step is proportional to the norm of constraint matrix and the initial centering parameter $\mu^0$. 
	\item The experiments illustrate that the proposed warmstarting strategy can effectively reduce the number of iterations and computation time of a primal-dual interior point algorithm when the perturbation is not too large. The experiments validate that the reduction ratio of both time and iteration number decrease when the magnitude of perturbation increases. 
\end{enumerate}

\paragraph{Notation}
We denote $\mathcal{I}_{\mathcal{K}}(x)$ as the indicator function of cone $\mathcal{K}$ and $\Pi_{\mathcal{K}}(x)$ as the projection of $x$ onto cone $\mathcal{K}$. The dual cone of $\mathcal{K}$ is denoted as $\mathcal{K}^*$. The negative of dual cone $\mathcal{K}^*$ is called the polar cone denoted as $\mathcal{K}^{\circ}$. We denote $\interior\mathcal{C}$ as the interior set of $\mathcal{C}$. $\indexset{n}$ denotes the index set $\{1, 2, \dots, n\}$. The second-order cone is abbreviated as $\mathcal{K}_{\text{soc}}^n := \{(x_0,x_1) \in \mathbb{R}^n \ | \ x_0^2 - \norm{x_1}^2 \ge 0\}$. $\mathbb{S}^n_+$ denotes the set of positive semidefinite cones. $\diag(x)$ transforms a vector $x \in \mathbb{R}^n$ into a diagonal matrix and $\mat(s)$ transforms a vector $s \in \mathbb{R}^{n \times n}$ into a square matrix.

\section{Background}
We consider the following conic optimization problem with a quadratic objective in this work:
\begin{align}
	\begin{aligned}
		p^* := \min_{x,s} \quad &  \frac{1}{2}x^\top P x + q^\top x \\
		\text {s.t.} \quad & Ax + s = b, \ s \in \mathcal{K} 
	\end{aligned}\label{primal-problem}
\end{align}
where $P \in \mathbb{R}^{n \times n}$ is symmetric positive semidefinite, $A \in \mathbb{R}^{m \times n}, q \in \mathbb{R}^n$ and$b \in \mathbb{R}^m$. The dual problem is 
\begin{align}
	\begin{aligned}
		d^* := \max_{x,z} \quad & -\frac{1}{2}x^\top P x - b^\top z \\
		\text {s.t.} \quad &  Px + A^\top z + q= 0, \ z \in \mathcal{K}^*.
	\end{aligned}\label{dual-problem}
\end{align}
We assume that the primal problem~\eqref{primal-problem} and the dual problem~\eqref{dual-problem} satisfy Slater's condition and the optimal solution $(x^*,s^*,z^*)$ satisfies the \textit{Karush-Kuhn-Tucker} (KKT) condition
\begin{align}
	\begin{aligned}
		Ax^* + s^* = b, \\
		Px^* + A^\top z^* + q= 0, \\ 
		s^* \in \mathcal{K}, z^* \in \mathcal{K}^*,\\
		\langle s^*, z^* \rangle = 0.
	\end{aligned}\label{KKT-condition}
\end{align}

\subsection{Logarithmically-homogeneous self-concordant barrier function}
The interior-point methods deal with conic constraints by imposing penalties over $s \in \interior\mathcal{K}, z \in \interior\mathcal{K}^*$. The function value $f(s)$ should converge to $+\infty$ when any sequence of points in cone $\mathcal{K}$ converges to a boundary point of $\mathcal{K}$ (barrier property). A function is \emph{self-concordant} if it satisfies the barrier property and
\begin{align}
	\begin{aligned}
		|\nabla^3 f(s)[r,r,r]| \le 2\left(\nabla^2 f(s)[r,r]\right)^{3/2}, & \ \forall s \in \interior(\mathcal{K}), r \in \mathbb{R}^d.
	\end{aligned}\label{concordant}
\end{align}
Moreover, a self-concordant function is called a $\nu$-\emph{logarithmically-homogeneous self-concordant barrier} (LHSCB) function for cone $\mathcal{K}$, if it further satisfies 
\begin{align}
	\begin{aligned}
		f(\lambda s) = f(s) - \nu \log(\lambda), & \ \forall s \in \interior(\mathcal{K}), \lambda > 0.
	\end{aligned}\label{homogeneous}
\end{align}
We call $\nu > 0$ the \emph{degree} of $f$. The convex conjugate $f^*$ of function $f$ is defined as
\begin{align}
	f^*(y) := \sup_{s \in \interior(\mcf K)} \{-\langle y,s \rangle - f(s)\}, \label{fundamental-conjugate-barrier-1}
\end{align}
which is also a $\nu$-LHSCB for $\mathcal{K}^*$~\cite{Nesterov1997} and we call $f^*(y)$ the \textit{conjugate barrier}. The gradient $\nabla f^*$ of $f^*$ is the solution of
\begin{align}
	\nabla f^*(y) := - \arg \sup_{s \in \interior(\mcf K)} \{-\langle y,s \rangle - f(s)\}. \label{fundamental-conjugate-gradient}
\end{align} 
There are some key properties of LHSCB $f(s), \forall s \in \interior(\mcf K)$:
\begin{align}
	\begin{aligned}
		\nabla f(\tau s)=\frac{1}{\tau} \nabla f(s), \quad \nabla^2 f (\tau s)=\frac{1}{\tau^2} \nabla^2 f(s), \\
		\left\langle \nabla f(s), s\right\rangle=-\nu,
	\end{aligned}\label{LHSCB-property-1}
\end{align}
and its relation with the conjugate barrier $f^*(y), \forall y \in \interior(\mcf K^*)$:
\begin{align}
	\begin{aligned}	
		-\nabla f(s) \in \interior(\mcf K^*), -\nabla f^*(y) \in \interior(\mcf K),\\
		f^*\left(-\nabla f(s)\right)=-\nu-f(s), \quad f\left(-\nabla f^*(y)\right)=-\nu-{f^*}(y), \\
		\nabla f^*\left(-\nabla f(s)\right)=-s, \quad \nabla f\left(-\nabla f^*(y)\right)=-y.
	\end{aligned}\label{LHSCB-property-2}
\end{align}
Relevant barrier functions for commonly supported cones in conic solvers can be found in Appendix~\ref{appendix:barrier-function}.

\subsection{Central path}
The LHSCB functions are not only used to tackle the conic constraints $\mathcal{K},\mathcal{K}^*$ but also used to smooth the complementarity condition $\innerprod{s}{z}$ within an interior point method. The residual mapping $R(x,s,z)$ is defined as 
\begin{align}
	\begin{aligned}
		R(x,s,z) = 
		\begin{bmatrix}
			r_d\\ r_p
		\end{bmatrix} :=
		\begin{bmatrix}
			\hphantom{+}P  & A\tpose  \\
			-A & 0 \\
		\end{bmatrix}
		\begin{bmatrix}
			x \\ z 
		\end{bmatrix}
		+
		\begin{bmatrix}
			q \\ b
		\end{bmatrix}
		-	
		\begin{bmatrix}
			0\\s
		\end{bmatrix}.
	\end{aligned}\label{residual-map}
\end{align}
The central path is defined as
\begin{align}
	\begin{aligned}
		R(x,s,z) &= \mu R(x^0,s^0,z^0), \\ 
		z &= -\mu \nabla f(s),
	\end{aligned}\label{general-central-path}
\end{align}
where $(x^0,s^0,z^0)$ is the initial point at the beginning of the interior point method and $\mu > 0$ is called the centering parameter of the central path. It is known that $\mu$ is continuously decreasing as an interior point method proceeds, and the limiting point of~\eqref{general-central-path} at $\mu \rightarrow 0^+$ is also the solution of the original KKT system~\eqref{KKT-condition}. For parametric programming where we need to solve a class of problems of the same structure repeatedly with different parameters, finding a warmstarting point $(x^0,s^0,z^0)$ such that the initial residual $R(x^0,s^0,z^0)$ and the initial $\mu$ are smaller than the default cold-start point can reduce the number of iterations and computational time in a new problem.  We can possibly utilize the optimal solution from the last problem when we assume the parameters of the new problem are only slightly different from the last one.

\subsection{Moreau envelope}
The \emph{Moreau envelope} or \emph{Moreau-Yosida regularization} is the foundation of the proximal algorithms~\cite{Parikh2014} and defined as
\begin{align}
\begin{aligned}
	f_{\alpha}(v)=\inf_x \left\{f(x)+\frac{1}{2 \alpha}\|x-v\|_2^2\right\},
\end{aligned}\label{Moreau-envelope}
\end{align}
where $\alpha > 0$ is the regularization term and $f(x)$ is a proper lower semi-continuous convex function. When we choose $\alpha=1$, \eqref{Moreau-envelope} then becomes the well-known proximal operator, i.e. $\text{prox}_f(v) := f_{1}(v)$. The \emph{Moreau decomposition} states that 
\begin{align}
\begin{aligned}
	\text{prox}_f(v) + \text{prox}_{f^*}(-v) = v
\end{aligned}\label{Moreau-decomposition}
\end{align}
when we follow the definition of conjugate function $f^*$ in~\eqref{fundamental-conjugate-gradient}~\cite[Section 2.5]{Parikh2014}. If $f$ is set to be an indicator function of a closed convex cone $\mathcal{K}$, then~\eqref{Moreau-decomposition} reduces to 
$$
v = \Pi_{\mathcal{K}}(v) + \Pi_{\mathcal{K}^{\circ}}(v) = \Pi_{\mathcal{K}}(v) - \Pi_{\mathcal{K}^*}(v) ,
$$ 
which says any point $v \in \mathbb{R}^n$ can be decomposed into two parts that fall into the cone $\mathcal{K}$ and its dual cone $\mathcal{K}^*$ (or its polar cone $\mathcal{K}^{\circ}$) respectively. This is an important property underlying many operator splitting methods~\cite{COSMO,Chen23a,SCS} where the intermediate primal-dual iterate $\{s^k,y^k\}, s^k \in \mathcal{K}, y^k \in \mathcal{K}^{\circ}$ always satisfies the complementarity condition $ \innerprod{s^k}{y^k} = 0$ for any iteration $k$.

\subsection{Smoothing operator}
Conic constraints are addressed differently between 1st-order optimization algorithms and 2nd-order optimization algorithms. Conic feasibility $s \in \mathcal{K}, z \in \mathcal{K}^*$ is guaranteed by projecting intermediate iterates back to conic constraints in 1st-order algorithms. The projection operation can be interpreted as a proximal operator $\Pi_{\mathcal{K}}$ as
\begin{align}
\begin{aligned}
	\Pi_{\mathcal{K}}(c) := \arg \min_s & \ \frac{1}{2} \sqtwo{s - c} + \mathcal{I}_{\mathcal{K}}(s),
\end{aligned}\label{projection-operator}
\end{align}
where $\mathcal{I}_{\mathcal{K}}$ is the indicator function of cone $\mathcal{K}$. The function $\mathcal{I}_{\mathcal{K}}$ is nonsmooth and not differentiable, so only the generalized Hessian information can be exploited for acceleration, as used in semismooth Newton methods~\cite{Ali2017,Li2018}.

Recently, an ADMM-based interior point (ADMM-IPM) method has been proposed~\cite{ABIP}, where the indicator function in the projection operator~\eqref{projection-operator} is replaced by the LHSCB $f(\cdot)$ of cone $\mathcal{K}$,
\begin{align}
\begin{aligned}
	S_{\mathcal{K},\eta}(c) := \arg \min_s & \ f_{\eta}(s) := \frac{1}{2} \sqtwo{s - c}  + \eta f(s),
\end{aligned}\label{smoothing-operator}
\end{align}
where $\eta>0$ is called the smoothing parameter. The barrier function $f(\cdot)$ was originally used in interior point methods for penalizing conic constraints, but it has been shown that the optimal solution of ADMM-IPM is the point on the central path of classical interior point methods, given a fixed centering parameter $\eta > 0$~\cite{ABIP}. 

We call the operator $S_{\mathcal{K},\eta}(\cdot)$ above the \emph{smoothing} operator, since the objective within the minimization problem~\eqref{smoothing-operator} is continuously differentiable. Moreover, problem~\eqref{smoothing-operator} is the minimization over a strictly convex function and the solution of $S_{\mathcal{K},\eta}(c)$ exists and is unique given the value of $c$. 

\section{Warmstarting point on the central path}

Given a starting point $v^0 = (x^0, s^0, z^0)$, the complexity of an interior point method is $O\left( \sqrt{\nu} \frac{\Phi(v^0)}{\epsilon}\right)$ where
$$
\Phi(v) = \max \{\mu(v), \norm{r_p(v)}, \norm{r_d(v)} \},
$$
$\nu$ is the degree of the cone $\mathcal{K}$ and $\epsilon$ is the desired accuracy level. The standard initial point for primal dual interior point methods is $C := (0, e_s, e_z)$, where $e_s = e_z$ is the identity vector defined in Jordan Algebra for symmetric cones and $e_s, e_z$ are constant vectors satisfying $e_z = - \nabla f(e_s), e_s \in \interior{\mathcal{K}}, e_z \in \interior{\mathcal{K}}$ for nonsymmetric cones~\cite{Dahl21}.

To obtain a better worst case complexity, we would need to initialize an interior point method at a point $v^0$ that is better than the cold start point $C$, which can be quantified by
\begin{align}
\begin{aligned}
    \mu (v^0) < \mu (C), \ \|r_p(v^0)\| < \|r_p(C)\|, \ \|r_d(v^0)\| < \|r_d(C)\|.
\end{aligned}\label{eqn:residual-metric}
\end{align}
In addition, the initial point also has to lie in the neighborhood $\mathcal{N}(\beta)$ of the central path
\begin{align}
    \mathcal{N}(\beta) := \{(s, z) \in \mathcal{K} \times \mathcal{K}^* | \nu_i \langle \nabla f (s_i), \nabla f^* (z_i) \rangle^{-1} \ge \beta \mu, i=1,\dots, p \},
\end{align}
where $\beta = 1$ characterizes the central path~\eqref{general-central-path}. It is known that the cold start $C$ satisfies $\mu(C)=1$ and lies on the central path. The ideal warmstarting point $v^0$ should satisfy~\eqref{eqn:residual-metric} that yields a smaller $\Phi (v^0)$, and stay close to the central path, i.e. $\mathcal{N}(\beta)$ for a $\beta \in (0,1]$ as close to $1$ as possible, that can push the initial point away from cone boundary and accept a large step size for interior point update. 

Our new warmstarting algorithm for primal dual interior point methods in conic optimization is summarized in Algorithm~\ref{alg:warmstart}:
\begin{algorithm}
	\caption{Warmstarting algorithm for parametric programming}
	\begin{algorithmic}[1]
		\REQUIRE Input $(x^*, s^*, z^*)$ from the last optimization problem, a given smoothing parameter $\mu^0 > 0$ and a scaling ratio $\lambda > 0$. \\
		\vspace*{2mm}
		\STATE Compute $c := s^* - \lambda z^*$.
		\STATE Compute the smoothed primal variable $s^0 = S_{\mathcal{K},\mu^0}(c)$ by~\eqref{smoothing-operator}.\\
        \STATE Compute the smoothed dual variable $z^0 = \frac{s^0 - (s^*-\lambda z^*)}{\lambda}$.
        \STATE Set $x^0 = x^*$
		\vspace*{2mm}
        \STATE Output $(x^0, s^0, z^0)$
	\end{algorithmic}\label{alg:warmstart}
\end{algorithm}
Note that there is no valid barrier function for equality constraints corresponding to $\mathcal{K} = \{0\}^n$. Hence, we choose the initial point $(s^0,z^0) = (s^*,z^*)$ for primal and dual variables when $\mathcal{K} = \{0\}^n$.

We prove that Algorithm~\ref{alg:warmstart} can generate an initial primal-dual pair $(s^0,z^0)$ on the central path.

\begin{thm}\label{theorem-1}
	Suppose we have a solution $(x^*,s^*,z^*)$ from a given optimization problem, a smoothing parameter $\mu^0$ and a scaling parameter $\lambda$ for the smoothing operator~\eqref{smoothing-operator}. The initial point generated by Algorithm~\ref{alg:warmstart} satisfies $(s^0, z^0) \in \interior\mathcal{K} \times \interior\mathcal{K}^*$ by setting $f(\cdot)$ to the $\nu$-LHSCB function of $\mathcal{K}$. If we also set $x^0 = x^*$, then the initial point $(x^0,s^0,z^0)$ is on the new central path parametrized by
	\begin{align}
		\begin{aligned}
			R(x,s,z) = \mu r^0, \\
			z = -\mu \nabla f(x),
		\end{aligned}\label{new-central-path}
	\end{align}
	where $r^0 = \frac{\lambda}{\mu^0} R(x^0,s^0,z^0)$, and we have $\innerprod{s^0}{z^0} = \frac{\nu \mu^0}{\lambda}$, i.e. $(s^0, z^0)$ is on the central path with parameter $\mu = \frac{\mu^0}{\lambda}$.
\end{thm}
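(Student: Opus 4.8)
The plan is to reduce everything to the first-order optimality condition of the smoothing operator \eqref{smoothing-operator} and then invoke the standard LHSCB identities \eqref{LHSCB-property-1}--\eqref{LHSCB-property-2}. First I would write out the stationarity condition for $s^0 = S_{\mcf{K},\mu^0}(c)$. Since the objective $\frac{1}{2}\sqtwo{s-c} + \mu^0 f(s)$ is strictly convex and the barrier term $f$ forces any minimizer into $\interior\mcf{K}$ (the gradient $\nabla f$ blows up toward the boundary, so no boundary point can satisfy stationarity), the unique minimizer $s^0$ lies in $\interior\mcf{K}$ and satisfies
\begin{align*}
    (s^0 - c) + \mu^0\nabla f(s^0) = 0.
\end{align*}
Substituting $c = s^* - \lambda z^*$ into line~3 of Algorithm~\ref{alg:warmstart}, namely $z^0 = \frac{s^0 - c}{\lambda}$, and using the stationarity condition yields the single key identity
\begin{align*}
    z^0 = \frac{s^0 - c}{\lambda} = -\frac{\mu^0}{\lambda}\nabla f(s^0).
\end{align*}
Every remaining claim follows from this identity.

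Second, I would establish the membership $(s^0, z^0) \in \interior\mcf{K} \times \interior\mcf{K}^*$ together with the central-path conditions. Interiority of $s^0$ is already secured above. For $z^0$, the property $-\nabla f(s)\in\interior\mcf{K}^*$ from \eqref{LHSCB-property-2} gives $-\nabla f(s^0)\in\interior\mcf{K}^*$, and since $\mu^0/\lambda > 0$ and $\mcf{K}^*$ is a convex cone, $z^0 = \frac{\mu^0}{\lambda}\bigl(-\nabla f(s^0)\bigr)\in\interior\mcf{K}^*$. The barrier equation in \eqref{new-central-path} is exactly the key identity read with $\mu = \mu^0/\lambda$, i.e. $z^0 = -\mu\nabla f(s^0)$. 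The residual equation holds at the starting point by construction: with $r^0 = \frac{\lambda}{\mu^0}R(x^0,s^0,z^0)$ and $\mu = \mu^0/\lambda$ we get $\mu r^0 = R(x^0,s^0,z^0)$, which is the residual map \eqref{residual-map} evaluated at the point itself. Hence $(x^0,s^0,z^0)$ lies on the new central path at parameter $\mu = \mu^0/\lambda$.

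Third, the complementarity value follows by pairing the key identity with $s^0$ and using the logarithmic-homogeneity relation $\innerprod{\nabla f(s^0)}{s^0} = -\nu$ from \eqref{LHSCB-property-1}:
\begin{align*}
    \innerprod{s^0}{z^0} = -\frac{\mu^0}{\lambda}\innerprod{s^0}{\nabla f(s^0)} = \frac{\nu\mu^0}{\lambda}.
\end{align*}
I expect the only genuinely delicate point to be justifying that the smoothing minimizer $s^0$ lands strictly inside $\mcf{K}$ rather than on its boundary; this is what makes $\nabla f(s^0)$ well-defined and lets the stationarity equation hold as an equality. Everything else is a direct substitution into the LHSCB properties, so the proof is essentially a bookkeeping exercise once the identity $z^0 = -\frac{\mu^0}{\lambda}\nabla f(s^0)$ is in hand.
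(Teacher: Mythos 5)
Your proposal is correct and follows essentially the same route as the paper's own proof: both derive the stationarity condition $s^0 - (s^*-\lambda z^*) + \mu^0\nabla f(s^0) = 0$, read off the key identity $z^0 = -\frac{\mu^0}{\lambda}\nabla f(s^0)$, and then invoke \eqref{LHSCB-property-2} for interiority of $z^0$ and \eqref{LHSCB-property-1} for $\innerprod{s^0}{z^0} = \frac{\nu\mu^0}{\lambda}$. If anything, you are marginally more careful than the paper, since you justify why the minimizer lands strictly inside $\mathcal{K}$ (the barrier gradient blowing up at the boundary), note that positive scaling preserves membership in $\interior\mathcal{K}^*$, and explicitly verify the (tautological) residual equation $R(x^0,s^0,z^0) = \mu r^0$ at $\mu = \mu^0/\lambda$, all of which the paper leaves implicit.
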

\begin{proof}
	The problem~\eqref{smoothing-operator} is strongly convex and therefore we can obtain a unique solution $s^0 = S_{\mathcal{K},\mu^0}(s^*- \lambda z^*)$ of it when we set $c=s^*- \lambda z^*$ with $\lambda > 0$. The constrained domain of LHSCB function $f(\cdot)$ ensures $s^0 \in \interior\mathcal{K}$. Considering the optimality condition of the problem~\eqref{smoothing-operator} with $\mu = \mu^0$,
	\begin{align*}
		s^0 - (s^*- \lambda z^*) + \mu^0 \nabla f(s^0) = 0,
	\end{align*}
	which implies
	\begin{align*}
		\lambda z^0 = s^0 - (s^*- \lambda z^*) = -\mu^0 \nabla f(s^0) \in \interior\mathcal{K}^*
	\end{align*}
	due to~\eqref{LHSCB-property-2} and 
	\begin{align*}
		\innerprod{s^0}{z^0} = -\frac{\mu^0}{\lambda}\innerprod{s^0}{\nabla f(s^0)} = \frac{\nu \mu^0}{\lambda}.
	\end{align*}
	due to~\eqref{LHSCB-property-1}.
\end{proof}

\section{Choice of the initial centering parameter $\mu^0$}
The centering parameter $\mu$ is used to characterize the central path and the limiting point $0$ corresponds to the optimal solution of the optimization problem. The initial point $(x^0,s^0,z^0)$ is determined by the choice of initial smoothing parameter $\mu^0$, which affects both the complementarity value $\innerprod{s^0}{z^0}$ but also the linear residual $R(x^0,s^0,z^0)$. Both have to be kept small for fast convergence of interior point methods. We propose the following strategy for selecting smoothing parameter $\mu^0$ in Theorem~\ref{theorem-1}, which is based on the information from the previous optimal solution $x^*,s^*,z^*$. For nonnegative cones, second-order cones and PSD cones, we prove that our warmstarting guarantees the linear residual $R(x^0,s^0,z^0)$ stays in the same precision level as the residual $R(x^*,s^*,z^*)$ before the smoothing step.
\begin{thm}\label{theorem-2}
	Suppose $x^*,s^*,z^*$ is the optimal solution from the previous problem. We choose the value $\mu^0$ to be the same magnitude of the residual $R(x^*,s^*,z^*)$ in the new problem, e.g. $\mu^0 = \norm{R(x^*,s^*,z^*)}_\infty$. For Algorithm~\ref{alg:warmstart}, we have the following results:
	\begin{itemize}
		\item For $s^*, z^* \in \mathbb{R}^n$, we assume at least one of $s^*_i,z^*_i$ is nonzero $\forall i \in \indexset{n}$ and set $\lambda = 1$ in Algorithm~\ref{alg:warmstart}. The residuals of the initial point $(x^0, s^0,z^0)$ is within the magnitude of $O(\mu^0)$ for optimization problems with linear constraints.
		\item For $S^*, Z^* \in \mathcal{K}_{\succeq}^n$, we assume $\rank(S^*)+\rank(Z^*) = n$ and set $\lambda = 1$ in Algorithm~\ref{alg:warmstart}. The residuals of the initial point $(x^0, s^0,z^0)$ is within the magnitude of $O(\mu^0)$ for optimization problems with PSD constraints.
        \item For a second-order cone constraint where $s^*:=(s^*_0,s^*_1), z^*:=(z^*_0,z^*_1) \in \mathbb{R} \times \mathbb{R}^{n-1}$ and $s^*,z^* \in \mathcal{K}_{\mathrm{soc}}^n$, we assume $s_0^*,z_0^* > 0$ and set $\lambda = s_0^*/z_0^*$. The value $\mu^0$ is set to $\mu^0 = \lambda \norm{R(x^*,s^*,z^*)}_\infty$. The residuals of the initial point $(x^0, s^0,z^0)$ is within the magnitude of $O(\mu^0)$ for second-order cone constraints.
	\end{itemize}
\end{thm}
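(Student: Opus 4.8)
The plan is to reduce all three cases to a single scalar estimate, $\norm{s^0 - s^*} = O(\mu^0)$, and then propagate it through the residual map~\eqref{residual-map}. The starting observation is that Algorithm~\ref{alg:warmstart} sets $x^0 = x^*$ and produces $z^0$ with $\lambda z^0 = s^0 - (s^* - \lambda z^*)$, so that $z^0 - z^* = \frac{1}{\lambda}(s^0 - s^*)$. Evaluating $R$ at the new and old points and subtracting, all terms built from the shared data $P,A,q,b$ cancel except through $x^0 - x^* = 0$, leaving $r_d(x^0,s^0,z^0) - r_d(x^*,s^*,z^*) = A\tpose(z^0 - z^*) = \frac{1}{\lambda}A\tpose(s^0 - s^*)$ and $r_p(x^0,s^0,z^0) - r_p(x^*,s^*,z^*) = -(s^0 - s^*)$. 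Because $\mu^0$ is chosen as (a multiple of) $\norm{R(x^*,s^*,z^*)}_\infty$, the old residual is itself $O(\mu^0)$, so it suffices to show $\norm{s^0 - s^*} = O(\mu^0)$; the additional error is then proportional to $\norm{A}$, $\lambda^{-1}$ and $\mu^0$, exactly the claimed magnitude.

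The common engine for the bound is the optimality condition of the smoothing operator~\eqref{smoothing-operator}, $s^0 - (s^* - \lambda z^*) + \mu^0 \nabla f(s^0) = 0$, together with complementarity of the previous optimum, $\innerprod{s^*}{z^*} = 0$. For the nonnegative orthant I would take $\lambda = 1$ and $f(s) = -\sum_i \log s_i$, so the condition decouples into the scalar quadratics $(s^0_i)^2 - (s^*_i - z^*_i)s^0_i - \mu^0 = 0$. Complementarity forces $s^*_i z^*_i = 0$, and the hypothesis that at least one of $s^*_i, z^*_i$ is nonzero upgrades this to strict complementarity, so in each coordinate exactly one of the two is positive. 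Solving the quadratic for its positive root and rationalizing yields $s^0_i - s^*_i = O\!\left(\mu^0/\max\{s^*_i,z^*_i\}\right)$, hence $\norm{s^0 - s^*} = O(\mu^0)$ with a constant set by the fixed nonzero entries. For the PSD cone I would again set $\lambda = 1$ and use $f(S) = -\log\det S$; here $\innerprod{S^*}{Z^*} = \trace(S^*Z^*) = 0$ with $S^*, Z^* \succeq 0$ forces $S^*Z^* = 0$, so $S^*$ and $Z^*$ commute and share an eigenbasis. The matrix condition $S^0 - (S^* - Z^*) - \mu^0 (S^0)^{-1} = 0$ is then solved by a matrix diagonal in that same basis, by uniqueness of the minimizer, which reduces the problem to the scalar quadratics above on the joint eigenvalues; the rank condition $\rank(S^*) + \rank(Z^*) = n$ is precisely strict complementarity and plays the role of the orthant hypothesis.

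The second-order cone is where I expect the real work, since its barrier $f(s) = -\log(s_0^2 - \norm{s_1}^2)$ does not separate. The idea is to exploit the geometry of SOC complementarity: $\innerprod{s^*}{z^*} = 0$ with $s^*_0, z^*_0 > 0$ forces both $s^*$ and $z^*$ onto the boundary with antiparallel spatial parts, $z^*_1 = -(z^*_0/s^*_0)s^*_1$. The choice $\lambda = s^*_0/z^*_0$ is engineered so that $\lambda z^* = (s^*_0, -s^*_1)$, whence $c = s^* - \lambda z^* = (0, 2s^*_1)$ is purely spatial. Using the rotational symmetry of the barrier about the axis of $s^*_1$, I would seek a solution $s^0 = (a, b\, s^*_1/\norm{s^*_1})$; the two optimality equations then collapse to $a^2 - b^2 = 2\mu^0$ and $b = s^*_0$, so that (using $\norm{s^*_1} = s^*_0$ on the boundary) $s^0 = \bigl(\sqrt{(s^*_0)^2 + 2\mu^0},\, s^*_1\bigr)$. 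Consequently $s^0 - s^* = \bigl(\sqrt{(s^*_0)^2 + 2\mu^0} - s^*_0,\, 0\bigr)$, which rationalizes to $O(\mu^0/s^*_0)$, giving $\norm{s^0 - s^*} = O(\mu^0)$ and closing the argument via the first paragraph. The main obstacle throughout is isolating the correct strict-complementarity structure for each cone and carrying out the explicit solve; once $\norm{s^0 - s^*} = O(\mu^0)$ is established, the residual estimate is immediate.
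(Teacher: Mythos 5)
Your proposal is correct and takes essentially the same route as the paper: reduce each case to $\norm{s^0 - s^*} = O(\mu^0)$ via the analytic solutions of the smoothing operator (scalar quadratics for the orthant, the same quadratics on the joint eigenvalues after using $S^*Z^*=0$ for the PSD cone, and the $c_0=0$ solve after showing $c=(0,2s_1^*)$ for the SOC case with $\lambda = s_0^*/z_0^*$), with strict complementarity supplying the $\min\{|c_i|\}$, $\min\{|d_i|\}$, $s_0^*$ constants, and then propagate through $R$ using $\Delta z = \Delta s/\lambda$ exactly as the paper does. The only discrepancy is the SOC barrier normalization: the paper uses $-\tfrac{1}{2}\log(s_0^2 - \norm{s_1}^2)$, so its solve yields $s_0^0 = \sqrt{\mu^0 + (s_0^*)^2}$ rather than your $\sqrt{2\mu^0 + (s_0^*)^2}$ --- a factor-of-two constant that does not affect the $O(\mu^0)$ conclusion.
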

\begin{proof}
	Given the settings in Theorem~\ref{theorem-1}, we have $s^0-z^0 = s^*-z^*$ for $c = s^*-z^*$, which implies the change is equal for both primal and dual variables, i.e. $\Delta s = s^0 - s^*= z^0 - z^* = \Delta z$. The complementarity slackness says $\innerprod{s^*}{z^*} = 0$. 
	
	\begin{enumerate}
		\item We first prove $O(\mu^0)$ deviation after the smoothing step for nonnegative cones. Assume at least one of $s^*_i,z^*_i$ is nonzero $\forall i \in \indexset{n}$, then $c_i$ is either positive when $s_i^* > 0$ or negative when $z^*_i > 0$. We also assume $\mu_0 \ll |c_i|,\forall i \in \indexset{n}$.
		
		For $c_i > 0$, we have $s^*_i =c_i > 0$ and the change $\Delta s_i$ becomes 
		\begin{align}
			\Delta s_i = s^0_i - s^*_i = \frac{\sqrt{c_i^2 + 4 \mu^0} - c_i}{2} = \frac{2\mu^0}{\sqrt{c_i^2 + 4 \mu^0} + c_i} \le \min \{\frac{\mu^0}{c^0},1\},
		\end{align}
		is of magnitude $O(\mu^0)$ for $\mu_0 \ll |c_i|$. For $c_i < 0$, we have $s^*_i = 0$ and the change 
		\begin{align}
			\Delta s_i = s_i^0 = \frac{\sqrt{c_i^2 + 4 \mu^0} + c_i}{2} = \frac{2\mu^0}{\sqrt{c_i^2 + 4 \mu^0} - c_i} \le \min \{\frac{\mu^0}{|c^0|},1\},
		\end{align}
		is also of the magnitude $O(\mu^0)$. Then, the change of residual due to the smoothing operator~\eqref{smoothing-operator} is
		\begin{align*}
			\norm{R(x^0,s^0,z^0) - R(x^*,s^*,z^*)}_\infty = 
			\norm{\begin{matrix}
					A^\top \Delta z\\ \Delta s
			\end{matrix}}_\infty \le (\norm{A}_\infty+1)\norm{\Delta s}_\infty,
		\end{align*}
		which implies
		\begin{align*}
			\begin{aligned}
				\norm{R(x^0,s^0,z^0)}_\infty & \le \norm{R(x^*,s^*,z^*)}_\infty + \norm{R(x^0,s^0,z^0) - R(x^*,s^*,z^*)}_\infty\\
				& \le \norm{R(x^*,s^*,z^*)}_\infty + (\norm{A}_\infty+1)\norm{\Delta s}_\infty.
			\end{aligned}
		\end{align*}
		Setting $\mu^0 = \norm{R(x^*,s^*,z^*)}_\infty$ in the inequality above yields
		\begin{align*}
			\begin{aligned}
				\norm{R(x^0,s^0,z^0)}_\infty \le \left[1 + \frac{\norm{A}_\infty+1}{\min_{i \in \indexset{n}}\{|c_i|\}}\right]\mu^0 .
			\end{aligned}
		\end{align*}
		Moreover, the duality gap is related to the complementarity quantity $\innerprod{s^0}{z^0}/\nu = \mu^0$, which is also dependent on the choice of $\mu^0$ like the linear residual $R(x^0,s^0,z^0)$.
		
		\item For positive semidefinite cones, we assume $\rank(S^*)+\rank(Z^*) = n$. Since the previous optimal solution $S^*,Z^*$ satisfies $S^* Z^*=0$, we can find the eigenvalue decomposition such that 
		\begin{align*}
			S^* = Q^\top \begin{bmatrix}
				D_1 & \\
				& 0
			\end{bmatrix}Q, \quad Z^* = Q^\top \begin{bmatrix}
			0 & \\
			& D_2
		\end{bmatrix}Q,
		\end{align*}
		where $Q$ is the matrix for eigenvectors and $D_1,D_2$ are diagonal matrices corresponding to positive eigenvalues. The complementarity condition implies
		\begin{align*}
			C = Q^\top \begin{bmatrix}
				D_1 & \\
				& -D_2
			\end{bmatrix}Q.
		\end{align*}
		Combining it with~\eqref{psd-solution}, we find the change of variables $\Delta S, \Delta Z$ are
		\begin{align*}
			\Delta S = \Delta Z = Q^\top \Delta D Q,
		\end{align*}
		where $\Delta D$ is
		\begin{align}
			(\Delta D)_{ii} := \left\{\begin{matrix}
				\frac{\sqrt{d_i^2 + 4\mu^0}-d_i}{2} = \frac{2\mu^0}{\sqrt{d_i^2 + 4\mu^0}+d_i}, & d_i > 0\\
				\frac{\sqrt{d_i^2 + 4\mu^0}+d_i}{2} = \frac{2\mu^0}{\sqrt{d_i^2 + 4\mu^0}-d_i} & d_i < 0
			\end{matrix}\right., \quad \forall i \in \indexset{n}.
		\end{align}
	Hence, the change of primal variable is bounded by
	\begin{align*}
		\norm{\Delta s}_\infty = \norm{\Delta S}_{\max} \le \norm{\Delta S} \le \norm{\Delta D} \le \min \left\{\frac{\mu^0}{\min_{i \in \indexset{n}}\{|d_i|\}}, 1\right\},
	\end{align*}
	and the change of dual variable
	\begin{align*}
	\begin{aligned}
		\norm{A^\top \Delta z}_\infty & = 
	\norm{
		\begin{matrix}
			\vdots \\
			\trace\left(A_i \Delta Z\right)\\
			\vdots
		\end{matrix}
	}_\infty \le 
	\norm{
		\begin{matrix}
			\vdots \\
			\trace|A_i|\norm{\Delta Z}\\
			\vdots
		\end{matrix}
	}_\infty \\
	& \le 
	\max_{i \in \indexset{m}} \{\trace|A_i|\} \norm{\Delta Z} \le \frac{\max_{i \in \indexset{m}} \{\trace|A_i|\}}{\min_{i \in \indexset{n}}\{|d_i|\}}\mu^0.
	\end{aligned}
	\end{align*}
	Then, the change of residual due to the smoothing operator~\eqref{smoothing-operator} is
	\begin{align*}
		\norm{R(x^0,s^0,z^0) - R(x^*,s^*,z^*)}_\infty = 
		\norm{\begin{matrix}
				A^\top \Delta z\\ \Delta s
		\end{matrix}}_\infty \le \frac{\max_{i \in \indexset{m}} \{\trace|A_i|\}+1}{\min_{i \in \indexset{n}}\{|d_i|\}}\mu^0,
	\end{align*}
	which implies
	\begin{align*}
		\begin{aligned}
			\norm{R(x^0,s^0,z^0)}_\infty & \le \norm{R(x^*,s^*,z^*)}_\infty + \norm{\begin{matrix}
					A^\top \Delta z\\ \Delta s
			\end{matrix}}_\infty\\
			& \le \left[1 + \frac{\max_{i \in \indexset{m}} \{\trace|A_i|\}+1}{\min_{i \in \indexset{n}}\{|d_i|\}}\right]\mu^0 .
		\end{aligned}
	\end{align*}
	The complementarity quantity is $S^0 Z^0 = S^0(S^0 - C) = S^0 \cdot \mu (S^0)^{-1}= \mu$.
	
	\item For $s^*, z^* \in \mathcal{K}_{\mathrm{soc}}^n$ from the previous optimization problem, we have $\langle s^*,z^* \rangle = s_0^* z_0^* + \langle s_1^*,z_1^* \rangle = 0$, due to the complementarity slackness. Also, we have $\langle s^*,z^* \rangle = s_0^* z_0^* + \langle s_1^*,z_1^* \rangle \ge s_0^* z_0^* - \|s_1^* \|\cdot \|z_1^*\| \ge \|s_1^* \|\cdot (z_0^* - \|z_1^*\|) \ge 0$ due to  $s^*, z^* \in \mathcal{K}_{\mathrm{soc}}^n$. The active constraints imply $s_0^* = \|s_1^*\|$, $z_0^* = \|z_1^*\|$ and $\langle\frac{ s_1^*}{s_0^*},\frac{z_1^*}{z_0^*} \rangle = -1$ ($s_1^*, z_1^*$ have the opposite direction). We can derive both $s^*$ and $z^*$ are on the boundary of $\mathcal{K}_{\mathrm{soc}}^n$ and $s_1^* = -\lambda z_1^*$ with $\lambda = \frac{s_0^*}{z_0^*}$. Then, we have
	\begin{align*}
		c = \begin{bmatrix}
			0 \\
			-2\lambda z_1^*
		\end{bmatrix}.
	\end{align*}
	According to the analysis in Section~\ref{subsec-socp}, the smoothed value $s^0,z^0$ are
	\begin{align*}
		s^0 = \begin{bmatrix}
			\sqrt{\mu^0+\norm{c_1}^2/4}\\
			\frac{c_1}{2}
		\end{bmatrix} = 
		\begin{bmatrix}
			\sqrt{\mu^0+\norm{s_1^*}^2}\\
			s_1^*
		\end{bmatrix}, \quad 
		z^0 = \frac{s^0 - c}{\lambda} = 
		\begin{bmatrix}
			\sqrt{\mu^0/\lambda^2+\norm{z_1^*}^2}\\
			z_1^*
		\end{bmatrix},
	\end{align*}
	which result in the complementarity quantity $\innerprod{s^0}{z^0} = \mu^0/\lambda$ and $s^0,z^0$ are on the nonlinear trajectory $z = -\mu \nabla f(x)$ at $\mu = \mu^0/\lambda$. The change of variables are, given $s^*_0 = \norm{s^*_1}$,
	\begin{align*}
		\Delta s = s^0 - s^* = 
		\begin{bmatrix}
			\sqrt{\mu^0+{s_0^*}^2} - s^*_0\\
			0
		\end{bmatrix} = 		
		\begin{bmatrix}
			\frac{\mu^0}{\sqrt{\mu^0+{s_0^*}^2} + s^*_0}\\
			0
		\end{bmatrix}, \quad
		\Delta z = z^0 - z^* = \frac{s^0 - s^*}{\lambda} = \Delta s/\lambda.
	\end{align*}
	Hence, the change of residual due to the smoothing operator $s^0 = S_{\mathcal{K},\mu^0}(s^*-\lambda z^*)$ is
	\begin{align*}
		\norm{R(x^0,s^0,z^0) - R(x^*,s^*,z^*)}_\infty = 
		\norm{\begin{matrix}
				A^\top \Delta z\\ \Delta s
		\end{matrix}}_\infty \le (\norm{A}_\infty/\lambda+1)\norm{\Delta s}_\infty,
	\end{align*}
	which implies
	\begin{align*}
		\begin{aligned}
			\norm{R(x^0,s^0,z^0)}_\infty & \le \norm{R(x^*,s^*,z^*)}_\infty + (\norm{A}_\infty/\lambda+1)\norm{\Delta s}_\infty\\
			& \le \left[\frac{1}{\lambda} + \frac{\norm{A}_\infty/\lambda+1}{2s^*_0}\right]\mu^0 = \left[1 + \frac{\norm{A}_\infty+\lambda}{2s^*_0}\right]\norm{R(x^*,s^*,z^*)}_\infty.
		\end{aligned}
	\end{align*}
	\end{enumerate}
\end{proof}

\section{Efficient computation for smoothing operators}
In this section, we will show that the smoothing operator~\eqref{smoothing-operator} is computationally efficient for several commonly supported cones in state-of-the-arts solvers~\cite{MOSEK, Clarabel, CuClarabel}.

Computation of the smoothing operator~\eqref{smoothing-operator} for nonnegative cones, second-order cones and positive semidefinite cones have analytical solutions, which have already discussed in the use of ABMM-IPM algorithm~\cite{ABIP}. We summarize the relevant computation in Appendix~\ref{appendix:smoothing-operator}.

Compared to nonnegative cones, second-order cones and positive-semidefinite cones, we can not obtain an analytical solution of~\eqref{smoothing-operator} for general conic constraints. Instead, we choose to compute~\eqref{smoothing-operator} numerically for general nonsymmetric cones via the damped Newton method, i.e.
\begin{align}
	s^{k+1} = s^k - \alpha_k [\nabla^2 f_{\mu}(s^k)]^{-1} \nabla f_{\mu}(s^k), \label{damped-Newton}
\end{align}
where 
\begin{align*}
	\alpha_k = \left\{\begin{array}{rl}
		\frac{1}{1+\lambda(f_{\mu}, s^k)} &, \quad \lambda(f_{\mu}, s^k) \ge \lambda^*\\
		1 &, \quad \text{else}
	\end{array}\right. ,
\end{align*}
$\lambda^* = 2-\sqrt{3}$ and $\lambda(f_{\mu}, s)$ is the Newton decrement defined as
$$
\lambda(f_{\mu}, s) := \norm{[\nabla^2 f_{\mu}(s)]^{-1/2}\nabla f_{\mu}(s)}, \ s \in \interior \mathcal{K}.
$$

We solve~\eqref{smoothing-operator} iteratively via~\eqref{damped-Newton} and it will finally converge to the unique solution of~\eqref{smoothing-operator}. We can show that the Newton method is computationally efficient on it for general nonsymmetric cones.
\begin{thm}
	Given a barrier function $f(s)$ that is self-concordant and lower-bounded, with the initial point $s^0 \in \interior \mathcal{K}$, $f_{\mu}(s)$ is also self-concordant and the damped Newton iterates are well defined, i.e. $s^k \in \interior\mathcal{K}$ and converge to the optimal solution of $f_{\mu}(s)$. Moreover, $s^k$ exhibits a quadratic convergence rate once the Newton decrement becomes sufficiently small, e.g. $\lambda(f_{\mu}, s^k) \le \lambda^*$.
\end{thm}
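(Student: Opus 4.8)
The plan is to reduce the statement to the classical damped-Newton convergence theory for self-concordant functions, after first verifying that the smoothed objective $f_{\mu}(s)=\tfrac12\|s-c\|^2+\mu f(s)$ from \eqref{smoothing-operator} inherits self-concordance from $f$. \textbf{Self-concordance.} The quadratic $q(s)=\tfrac12\|s-c\|^2$ has $\nabla^3 q\equiv0$ and $\nabla^2 q=I$, so it satisfies \eqref{concordant} trivially and forces $\nabla^2 f_\mu=I+\mu\nabla^2 f\succeq I\succ0$; hence $f_\mu$ is strongly convex with nondegenerate Hessian and $\domain f_\mu=\interior\mathcal K$. Combining \eqref{concordant} for $f$ with $\nabla^2 f_\mu[r,r]\ge\mu\,\nabla^2 f[r,r]$ gives
\[
|\nabla^3 f_\mu(s)[r,r,r]|=\mu\,|\nabla^3 f(s)[r,r,r]|\le 2\mu\,(\nabla^2 f(s)[r,r])^{3/2}\le \tfrac{2}{\sqrt{\mu}}\,(\nabla^2 f_\mu(s)[r,r])^{3/2},
\]
so $f_\mu$ is self-concordant with parameter $1/\sqrt{\mu}$. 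Equivalently, $F:=\tfrac1\mu f_\mu=\tfrac{1}{2\mu}\|s-c\|^2+f$ is a \emph{standard} self-concordant function, since adding a convex quadratic leaves $\nabla^3$ unchanged while only enlarging the Hessian and $t\mapsto t^{3/2}$ is increasing. Because the Newton direction is invariant under positive scaling of the objective, I would run the analysis on $F$, recording that the decrements relate by $\lambda(f_\mu,s)=\sqrt{\mu}\,\lambda(F,s)$.

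\textbf{Interior iterates and global convergence.} Applying the textbook damped-Newton analysis to the standard self-concordant $F$ with positive-definite Hessian yields three facts. Each iterate stays in $\domain F=\interior\mathcal K$, because the damped step lies strictly inside the unit Dikin ellipsoid $\{y:\|y-s^k\|_{\nabla^2 F(s^k)}<1\}\subseteq\domain F$. There is a guaranteed decrease $F(s^{k+1})\le F(s^k)-\omega(\lambda(F,s^k))$ with $\omega(t)=t-\log(1+t)>0$; since $f$ is lower bounded and the quadratic term is coercive, $F$ is bounded below, so the decreases telescope, $\lambda(F,s^k)\to0$, and any fixed threshold is reached in finitely many steps. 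Finally coercivity and the barrier property furnish a unique minimizer $S_{\mathcal K,\mu}(c)\in\interior\mathcal K$, to which $s^k$ converges as $\lambda(F,s^k)\to0$ via the self-concordant proximity bound.

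\textbf{Quadratic convergence.} Once the decrement drops below the switching threshold, full Newton steps are taken and the classical estimate $\lambda(F,s^{k+1})\le\bigl(\lambda(F,s^k)/(1-\lambda(F,s^k))\bigr)^2$ applies; a direct check gives $t/(1-t)^2\le\tfrac12$ on the relevant interval, so the decrements stay below the threshold and contract quadratically to zero, which yields quadratic convergence of $s^k$ to $S_{\mathcal K,\mu}(c)$.

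\textbf{Main obstacle.} The delicate point is the scaling in the first step: in the regime of interest $\mu<1$ the barrier $\mu f$ is \emph{not} standard self-concordant (indeed $f_\mu$ has parameter $1/\sqrt\mu>1$), so one cannot directly attach the standard constants to $f_\mu$. The remedy is to pass to $F=\tfrac1\mu f_\mu$ and carry the identity $\lambda(f_\mu,s)=\sqrt{\mu}\,\lambda(F,s)$ through both the damping coefficient $\alpha_k$ in \eqref{damped-Newton} and the switching threshold $\lambda^*=2-\sqrt3$, checking that the algorithm's rule written in terms of $\lambda(f_\mu,\cdot)$ indeed corresponds to a safe damped or full step for the standard representative $F$; getting this bookkeeping right is what makes the stated thresholds rigorous.
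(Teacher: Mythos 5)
Your proposal follows the same overall route as the paper's proof: the quadratic term has vanishing third derivative so $f_\mu$ inherits self-concordance, interior feasibility of the iterates comes from the Dikin-ellipsoid containment, global convergence from the decrease $f_\mu(s^{k+1})\le f_\mu(s^k)-\omega(\lambda)$ plus lower-boundedness, and the quadratic phase from the classical estimate below the threshold $\lambda^*=2-\sqrt3$ (your check that $2-\sqrt3$ solves $t/(1-t)^2=\tfrac12$ is exactly the right certificate). Where you genuinely differ is the normalization step: the paper invokes preservation of self-concordance under summation and then applies the standard-constant theorems directly to $f_\mu$ with the raw decrement $\lambda(f_\mu,\cdot)$, whereas you observe that in the regime of interest $\mu<1$ (recall $\mu^0=\norm{R(x^*,s^*,z^*)}_\infty$ is small) $f_\mu$ is self-concordant only with constant $2/\sqrt{\mu}$, pass to the standard representative $F=\tfrac1\mu f_\mu$, and record $\lambda(f_\mu,s)=\sqrt{\mu}\,\lambda(F,s)$. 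This extra care is not pedantry: it exposes a real issue that the paper's proof glosses over.

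The one step you deferred --- ``checking that the algorithm's rule written in terms of $\lambda(f_\mu,\cdot)$ corresponds to a safe step for $F$'' --- does not, in fact, check out, so you cannot close the argument as optimistically as the last paragraph suggests. The algorithm's damped step has $F$-local norm $\lambda(F)/(1+\sqrt{\mu}\,\lambda(F))$, which exceeds $1$ for small $\mu$ and large $\lambda(F)$, so neither the unit Dikin ellipsoid of $F$ nor that of $f_\mu$ (which is \emph{not} contained in $\interior\mathcal K$ when $\mu<1$: for $f(s)=-\log s$, $\mu=0.01$, $s=0.1$ the $f_\mu$-unit ball has Euclidean radius $\approx 0.707>s$) certifies feasibility. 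A concrete failure of the rule as literally stated: take $\mathcal K=\Re_+$, $f(s)=-\log s$, $\mu=10^{-4}$, $c=-0.1414$, $s^0=0.01$; then $\nabla f_\mu(s^0)=0.1414$, $\nabla^2 f_\mu(s^0)=2$, so $\lambda(f_\mu,s^0)\approx 0.1<\lambda^*$, the rule takes a \emph{full} Newton step of length $0.0707$, and $s^1\approx-0.0607\notin\interior\mathcal K$. So the iterates are not even well defined under the stated thresholds, and the same objection applies to the paper's own proof, which implicitly treats $f_\mu$ as standard self-concordant (valid only for $\mu\ge 1$). The repair is exactly the bookkeeping you set up but must carry through to its conclusion: run the rule on the normalized decrement, i.e.\ damp with $\alpha_k=1/\bigl(1+\lambda(f_\mu,s^k)/\sqrt{\mu}\bigr)$ and switch to full steps only when $\lambda(f_\mu,s^k)\le\sqrt{\mu}\,\lambda^*$, equivalently apply the verbatim rule to $\lambda(F,\cdot)$. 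With that emendation your $F$-based argument is complete (in the numerical example above it gives $\alpha=1/11$ and $s^1\approx 0.0036>0$); without it, the statement with the paper's literal constants is false for small $\mu$, which is a finding worth stating rather than a loose end.
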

\begin{proof}
We can easily verify that the quadratic term $\frac{1}{2}\norm{s-c}^2$ satisfies~\eqref{concordant} and is self-concordant. Since the self-concordance is preserved under the direct summation (Proposition 2.1.1~\cite{Nesterov1994}), $f_{\mu}(s)$ remains to be self-concordant. 

Suppose $s^k \in \interior\mathcal{K}$. We define the step direction $\Delta^k = [\nabla^2 f_{\mu}(s^k)]^{-1} \nabla f_{\mu}(s^k)$, the Euclidean seminorm of $\Delta^k$ becomes
\begin{align*}
	\norm{s^{k+1} - s^k}_{s^k,f_{\mu}}^2 &= \frac{\norm{[\nabla^2 f_{\mu}(s^k)]^{-1}\nabla f_{\mu}(s^k)}_{s^k,f_{\mu}}^2}{(1+\lambda(f_{\mu}, s^k))^2} = \frac{\nabla f_{\mu}(s^k)^\top [\nabla^2 f_{\mu}(s^k)]^{-1}\nabla f_{\mu}(s^k)}{(1+\lambda(f_{\mu}, s^k))^2} \\
	& = \frac{\lambda(f_{\mu}, s^k)^2}{(1+\lambda(f_{\mu}, s^k))^2} < 1,
\end{align*}
i.e. the radius of the Dikin's ellipsoid at $s^{k}$ is less than $1$, which implies $s^{k+1} \in \interior \mathcal{K}$ (Theorem 2.1.1~\cite{Nesterov1994}). According to Theorem 2.2.1 in~\cite{Nesterov1994}, the damped Newton method is monotonically decreasing
\begin{align}
	f_{\mu}(s^{k+1}) \le f_{\mu}(s^k) - \omega(\lambda(f_{\mu}, s^k)), \label{monotone-decreasing}
\end{align}
where
\begin{align*}
	\omega(t) = t - \log(1+t)
\end{align*}
is monotonically increasing and positive for $\forall t > 0$. Since $f_{\mu}(s^k)$ is lower bounded and monotone decreasing, $\omega(\lambda(f_{\mu}, s^k))$ will converge to $0$ and the same for the Newton decrement $\lambda(f_{\mu}, s^k)$ by monotonicity. Once we have $\lambda(f_{\mu}, s^k) < {\lambda^*}$, it has been proved $\norm{s^{k+1} - s^k}_{s^k,f_{\mu}}^2 \le {\lambda^*}^2$ and the Newton decrement $\lambda(f_{\mu}, s^k)$ will converge quadratically under the standard Newton method with step size $\alpha_k=1$ (Theorem 2.2.3~\cite{Nesterov1994}) and the functional value $f_{\mu}(s^k)$ converges to the minimum of $f_{\mu}(s)$ (Theorem 2.2.2~\cite{Nesterov1994}).
\end{proof}

Since we usually exploit the warmstarting technique within a problem that needs to be solved repetitively with slight perturbation on parameters between two consecutive problems, it is likely that the initial point $s^0$ resides in the neighbourhood of the optimum of~\eqref{smoothing-operator} and the damped Newton method is prone to start from the region with quadratic convergence rate.

Note that two common nonsymmetric cones that are supported in modern conic solvers~\cite{MOSEK,Clarabel} are exponential and power cones. Both of them are $3$-dimensional and the Newton method will be computationally efficient for their smoothing operators~\eqref{smoothing-operator}.

\section{Primal-dual interior point methods}
The proposed warmstarting strategy is applicable for the class of primal-dual interior point methods. We test it on the Clarabel conic solver~\cite{Clarabel}, which is based on a primal-dual interior point method with a predictor-corrector framework. Since our warmstarting strategy is independent from other parts of a primal-dual interior point method, we omit the implementation of Clarabel and refer readers to~\cite{Clarabel} for details of Clarabel solver.
\subsection{Warmstarting with homogeneous embedding}
General conic solvers also need to consider the case when a problem is infeasible. The homogeneous embedding is widely used within an interior point method for infeasibility detection, e.g. Mosek~\cite{MOSEK} and Clarabel~\cite{Clarabel} solver. The central path is then defined slightly differently compared to~\eqref{general-central-path}, which is
\begin{align}\label{eqn:central_path_compact}
\begin{aligned}
	G(v) &= \mu G(v^0), \\
	z &= -\mu \nabla f(s),\\
	\tau \kappa & = \mu,
\end{aligned}
\end{align}
where the monotone mapping $G(v)$ is defined as, given $v := (x,z,s,\tau,\kappa)$,
\begin{equation}\label{eqn:root_finding_G}
	G(v) := 
	\begin{bmatrix}
		\hphantom{+}P  & A\tpose & q \\
		-A & 0 & b\\
		-q\tpose & -b\tpose & 0
	\end{bmatrix}
	\begin{bmatrix}
		x \\ z \\ \tau 
	\end{bmatrix}
	-
	\begin{bmatrix}
		0 \\ 0 \\
		\frac{1}{\tau}x\tpose P x
	\end{bmatrix}
	-
	\begin{bmatrix}
		0\\s\\ \kappa 
	\end{bmatrix}.
\end{equation}
The initial values $x^0,s^0,z^0$ are set as we proposed before. We then set the initial value $\tau^0 = 1$ and $\kappa^0 = \mu$ so that the initial pair $(\tau^0,\kappa^0)$ also lies on the new central path~\eqref{eqn:central_path_compact}.

\subsection{Termination criterion}
The interior point method is an iterative method and solves an optimization problem approximately to $\epsilon$-optimality. For checks of primal and dual feasibility, we define primal and dual objectives as 
\begin{align*}
	g_p &\eqdef \frac{1}{2}  x^\top P  x + q \tpose x  \\
	g_d &\eqdef -\frac{1}{2}  x^\top P  x - b\tpose  z,
\end{align*}
and use the primal residual $r_p$ and the dual residual $r_d$ defined in~\eqref{residual-map}.

We then declare convergence if all of the following three conditions holds:
\begin{align*}
	\norm{r_p} &< \epsilon \cdot \max\{1, \norm{b}_\infty + \norm{ x} + \norm{ s}\} \\
	\norm{r_d} &< \epsilon \cdot \max\{1, \norm{q}_\infty + \norm{ x} + \norm{ z}\} \\
	|g_p - g_d| &< \epsilon\cdot  \max\{1, \min\{|g_p|, |g_d|\}\}.
\end{align*}

The precision is set to $\epsilon = 10^{-8}$ by default for later experiments if we do not specify it explicitly. For infeasibility check, we declare primal infeasibility if 
\begin{align*}
	\norm{A\tpose z} & < -\epsilon_{i,r} \cdot \max(1, \norm{x} + \norm{z}) \cdot (b\tpose z) \\
	b\tpose z & < -\epsilon_{i,a},
\end{align*}
and dual infeasibility if 
\begin{align*}
	\norm{Px} & < -\epsilon_{i,r} \cdot \max(1, \norm{x}) \cdot (b\tpose z) \\
	\norm{Ax + s} & < -\epsilon_{i,r} \cdot \max(1, \norm{x} + \norm{s}) \cdot (q\tpose x) \\
	q\tpose x  & < -\epsilon_{i,a},
\end{align*}
where we set $\epsilon_{i,r} = \epsilon_{i,a} = 10^{-8}$ by default.

\section{Numerical Experiments}
In this section, we evaluate the efficacy of the proposed smoothing operator~\eqref{smoothing-operator} for warmstarting across a variety of problem sets. \footnote{See tests on \url{https://github.com/oxfordcontrol/Clarabel.jl/tree/yc/warmstart/warmstart_test}.} We begin by outlining the general methodology employed in our testing. Subsequently, we present the results, focusing on applications such as hyperparameter tuning in machine learning and iterative reoptimization in portfolio optimization under diverse models. Lastly, we check how the effectiveness of our warmstarting is affected by the magnitude of perturbation on parameters.  

\subsection{General metric}
We compare the proposed warmstarting method with the standard coldstarting used in current solvers~\cite{MOSEK,Clarabel}. Both are implemented based on the Clarabel solver~\cite{Clarabel}.
We use the same metric in~\cite{Skajaa2013} to measure the efficacy of our warmstarting technique. Given a problem $\mathcal{P}_i$, we define the reduction ratio of the warmstarting technique as
\begin{align*}
	\mathcal{R}_i = \frac{\text{number of iterations of the warmstarted IPM on }\mathcal{P}_i}{\text{number of iterations of the coldstarted IPM on }\mathcal{P}_i}.
\end{align*}

For a set of test problems $\mathcal{R}_1,\dots,\mathcal{R}_N$, we evaluate the overall reduction rate $\mathcal{R}$ of warmstarting by the geometric mean defined as
\begin{align*}
	\mathcal{R} = \left(\prod_{i=1}^N \mathcal{R}_i \right)^{\frac{1}{N}}.
\end{align*}
A lower value of $\mathcal{R}$ means the warmstarting is more effective on reducing computation over test problems.

\subsection{Tuning hyperparameters in support vector machines}
The support vector machine (SVM) is a classical machine learning model where the training process can be formulated as solving a convex optimization problem. Suppose we have a set of training samples $\left\{x_i, y_i\right\}_{i=1}^m \subseteq \mathbb{R}^n \times\{-1,+1\}$ for a standard binary classification problem, the final classifier will be $h_{w, b}(x)=\operatorname{sgn}(\langle w, x\rangle+b)$, where the parameters $w, b$ are the solution of the following convex optimization problem:
\begin{align}
\begin{aligned}
	\min_{w, b, \xi}: \quad& \frac{1}{m}\sum_{i=1}^m \xi_i + R(w, b) \\
	\text{ s.t.} \quad& \xi_i \geq 1-y_i\left(\left\langle w, x_i\right\rangle+b\right), \ \forall i = 1, \dots, m \\
	\quad & \xi_i \geq 0,
\end{aligned}\label{svm-original}
\end{align}
where $\zeta \in \mathbb{R}^m$ is the slack variable characterizing prediction error and $R(w,b)$ is the regularization for parameters $w,b$.

\subsubsection{SVM with $L_1$ regularization}
When we choose the norm-1 regularization, i.e. $R(w,b) = \lambda \norm{w}_1$, the problem~\eqref{svm-original} becomes a linear program where $\lambda > 0$ is a hyperparameter that should be finely tuned. We initially solve~\eqref{svm-original} with $\lambda=0.01$ and then obtain a pair of parameter $(w^*,b^*)$. Then, we increase $\lambda$ by $0.01$ and warmstart the new problem with $(w^*,b^*)$ and repeat the tuning process several times for better predicting performance, which is called the hyperparameter tuning in machine learning.

We take half of data from the MNIST dataset and train a binary SVM classifier with the norm-$1$ regularization. The results for both warmstarting and coldstarting are shown in Table~\ref{table:svm_hyperparameter_tuning}. The overall reduction rate of the iteration number is $\mathcal{R}_{iter} = 0.4984$ and the reduction rate of the solve time is $\mathcal{R}_t = 0.4561$, which shows our warmstarting technique is effective for the hyperparameter tuning. 

\begin{longtable}{||c||cc||cc||}
	\caption{Tuning the regularization parameter $\lambda$ in SVMs}
	\label{table:svm_hyperparameter_tuning}
	\\
	& \multicolumn{2}{c||}{iterations}& \multicolumn{2}{c||}{solve time (s)}\\[2ex] 
	$ \lambda $ value & Warm & Cold & Warm & Cold\\[1ex]
	\hline
	\endhead
	\sc{0.02} & 33 & 49 & 47.4 & 72.9\\ 
	\sc{0.03} & 21 & 20 & 30.7 & 31.2\\ 
	\sc{0.04} & 11 & 20 & 15.6 & 30.7\\ 
	\sc{0.05} & 10 & 19 & 14.1 &   29\\ 
	\sc{0.06} & 9 & 20 & 12.8 &   32\\ 
	\sc{0.07} & 8 & 21 & 11.5 & 33.4\\ 
	\sc{0.08} & 8 & 20 & 11.4 &   30\\ 
	\sc{0.09} & 8 & 19 & 10.8 &   28\\ 
	\sc{ 0.1} & 8 & 20 & 10.9 & 31.8\\ 
	\sc{0.11} & 8 & 20 & 11.1 & 31.6\\ 
\end{longtable}

\subsubsection{Robust SVM with $L_2$ regularization}
When we choose the norm-2 regularization, i.e. $R(w,b) = \lambda \norm{w}_2$, the problem~\eqref{svm-original} can be interpreted as a robust optimization problem~\cite{Xu2009}, which can be reformulated as a second-order cone program. We initialize $\lambda = 0.01$ and then repeat the tuning of $\lambda$ with increment equal to $0.01$.
\begin{longtable}{||c||cc||cc||}
	\caption{Tuning the regularization parameter $\lambda$ in robust SVMs}
	\label{table:svm_robust_hyperparameter_tuning}
	\\
	& \multicolumn{2}{c||}{iterations}& \multicolumn{2}{c||}{solve time (s)}\\[2ex] 
	$ \lambda $ value & Warm & Cold & Warm & Cold\\[1ex]
	\hline
	\endhead
	\sc{0.02} & 30 & 58 &   43 & 84.6\\ 
	\sc{0.03} & 28 & 53 & 40.2 & 77.3\\ 
	\sc{0.04} & 27 & 50 &   39 & 72.5\\ 
	\sc{0.05} & 28 & 50 & 40.1 & 72.8\\ 
	\sc{0.06} & 29 & 53 & 41.5 & 77.8\\ 
	\sc{0.07} & 31 & 48 & 43.2 & 67.9\\ 
	\sc{0.08} & 28 & 45 & 38.7 & 67.2\\ 
	\sc{0.09} & 30 & 45 & 42.9 & 65.9\\ 
	\sc{ 0.1} & 28 & 43 &   40 &   63\\ 
	\sc{0.11} & 28 & 41 & 40.2 & 61.7\\ 
\end{longtable}
The overall reduction rate of the iteration number is $\mathcal{R}_{iter} = 0.5931$ and the reduction rate of the solve time is $\mathcal{R}_t = 0.5775$.

\subsection{Portfolio optimization}
Portfolio optimization is a classical model used in quantitative finance~\cite{cornuejols2006}, which is of the following form:
\begin{align}
	\begin{aligned}
		\min _{x} \quad \mathcal{R}\left(-r^{\top} x\right) \\
		\text { s.t. } \quad \mathbf{e}^{\top} x =1, \\
		\bar{r}^{\top} x \geq r_0, \\
		\quad x \geq 0.
	\end{aligned}\label{portfolio-original}
\end{align}
The return $r \in \mathbb{R}^n$ for each asset is unknown and we need to estimate the expected return $\bar{r}$ from historical data. The problem~\eqref{portfolio-original} aims to minimize the risk of loss $\mathcal{R}(\cdot)$ if we aim to find a portfolio $x \in \mathbb{R}^n$ that the expected return is no less than $r_0$. The sum of portfolio is normalized to $1$ and $x \ge 0$ imposes no short-selling constraint.

Suppose we consider the standard mean-variance model~\cite{cornuejols2006} where we choose the risk $\mathcal{R}$ to be the variance of random variable $r$. The problem~\eqref{portfolio-original} can be reformulated as a second-order cone program by introducing the slack variable $t$:
\begin{align}
	\begin{aligned}
		\min _{t,x} \quad & \quad t \\
		\text { s.t. } \quad & \mathbf{e}^{\top} x =1, \\
		& \sqrt{x^\top \Sigma_r x} \le t, \\
		&\bar{r}^{\top} x \geq r_0, \\
		& x \geq 0,
	\end{aligned}\label{portfolio-efficient-frontier}
\end{align}
where $\Sigma_r$ is the variance matrix for the return variable $r$. Given the return matrix $R \in \mathbb{R}^{d \times n}$ for the last $d$ days, the expected return $\bar{r}$ is the mean of $R$ over each column and the variance matrix $\Sigma_r$ is obtained from $\Sigma_r = (d-1)^{-1}(R-\mathbf{e}\bar{r}^\top)^\top(R-\mathbf{e}\bar{r}^\top)$. We input the variance constraint as an second-order constraint $(t,Ux) \in \mathcal{K}_{\mathrm{soc}}^n$ in Clarabel solver~\cite{Clarabel}, where $U$ is from the Cholesky factorization of the variance matrix $\Sigma_r = U^\top U$. We set $d = 500$ and $n = 300$ in this section and the historical return $R$ is extracted from $n$ stocks in the S\&P500 stock index. We first test frequent portfolio rebalancing and efficient frontier problems similar to~\cite{Skajaa2013}, and then test with a different risk measure based on the power cone~\cite{Krokhmal2007}.

\subsubsection{Frequent portfolio rebalancing}\label{subsec:portfolio-rebalance}
Given a fixed selection of $n$ assets, the portfolio optimization problem~\eqref{portfolio-efficient-frontier} needs to be reoptimized over time with slight changes for values of $\bar{r},\Sigma_r$ as they are estimated from the last $d$ days. The problem can be regarded as a parametric second-order cone programming w.r.t. $\bar{r}, \Sigma_r$. We estimate parameters $\bar{r},\Sigma_r$ by the same procedure everyday, where the time horizon is shifted by one day compared to the last estimate, and reoptimize the portfolio with new estimates $\bar{r},\Sigma_r$ in~\eqref{portfolio-efficient-frontier}. We expect the optimal solution is close to the solution from the last day and the warmstarting should be effective in reducing computational time. We reuse the optimal solution $(t^*,x^*)$ from the last day as the input for the smoothing operator~\eqref{smoothing-operator}. The output of the smoothing operator is then used as the warmstarting point for the next run.

\begin{figure}
	\centering
	\caption{100 consecutive transactions for the mean-variance model}
	\includegraphics[width=0.8\textwidth]{./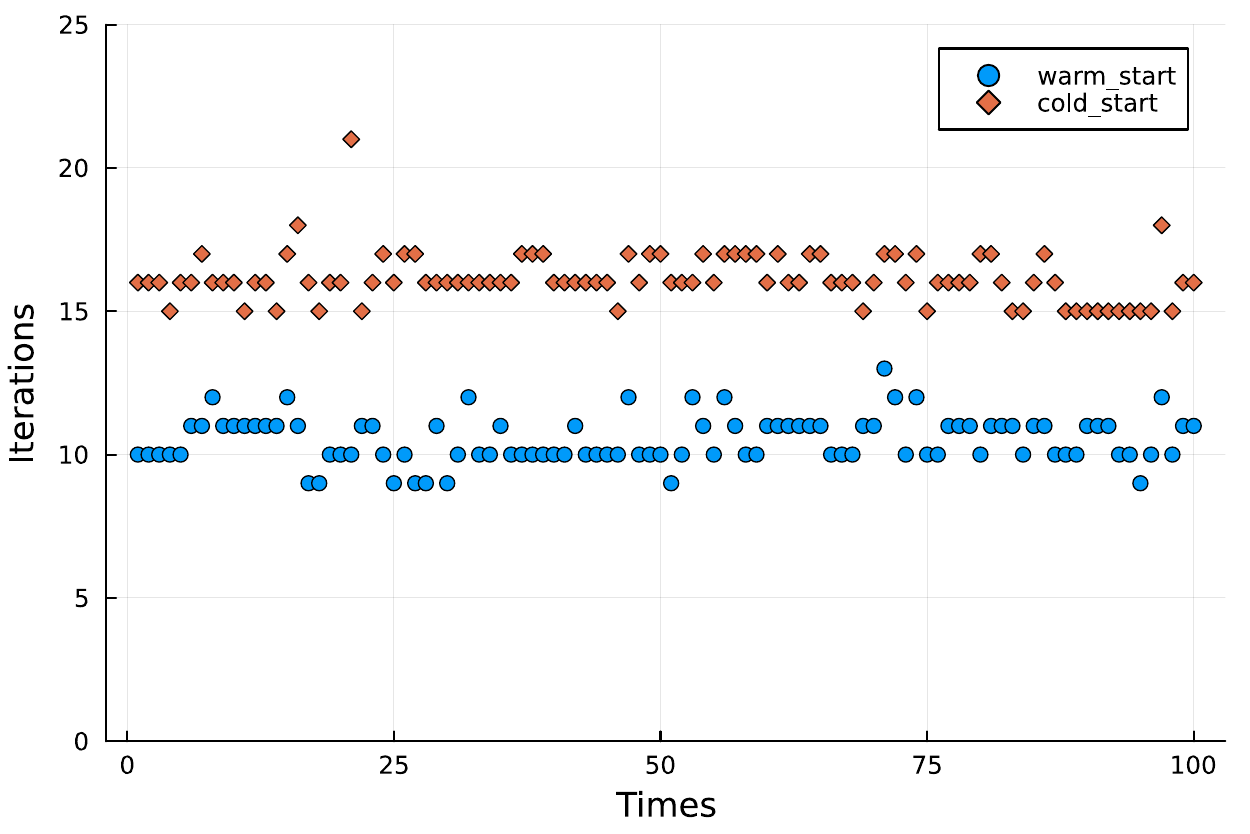}
	\label{fig:portfolio-daily-transaction}
\end{figure}

The results are shown in Figure~\ref{fig:portfolio-daily-transaction} where the transaction is simulated for $N=100$ repeated times. We find that the warmstarting is always faster than the coldstarting in the portfolio rebalancing problem, and the overall reduction rate of the iteration number is $\mathcal{R}_{iter} = 0.6277$ and the reduction rate of the solve time is $\mathcal{R}_t = 0.5934$.

\subsubsection{Efficient frontier}
Computing efficient frontier in the Markowitz portfolio selection can also benefit from warmstarting strategies when we need to solve a sequence of portfolio optimization problems with different choices of the minimum return $r_0$ in~\eqref{portfolio-efficient-frontier}. The \emph{efficient frontier} of problem~\eqref{portfolio-efficient-frontier} is the pair of points $(r_0, f (r_0))$ for $t \in [0,\max(\bar{r})]$ where $f(r_0)$ is the optimum of~\eqref{portfolio-efficient-frontier} that is parametrized by the selection of $r_0$. We compute the optimal solution of problem~\eqref{portfolio-efficient-frontier} with the initial value $r_0 = 0.001$ and then increase $r_0$ by $0.0001$ for every new problem up to $r_0 = 0.002$, where we apply the warmstarting technique we proposed and compare it with the coldstart.

The results are shown in Table~\ref{table:efficient-frontier} where we list values of $r_0$ and $f(r_0)$, along with the corresponding iteration number and solve time. The risk $f(r_0)$ is increasing as we set higher return goal for our portfolio, which is consistent with our expectation. The overall reduction rate of the iteration number is $\mathcal{R}_{iter} = 0.5353$, which is close to the reduction rate of the solve time is $\mathcal{R}_t = 0.5021$. It implies the additional time spent on warmstarting is negligible compared to solve time within an interior point method for second-order cone optimization.
\begin{longtable}{||cc||cc||cc||}
	\caption{Efficient frontier}
	\label{table:efficient-frontier}
	\\
	\multicolumn{2}{||c||}{{Values}} & \multicolumn{2}{c||}{{iterations}}& \multicolumn{2}{c||}{{solve time (s)}}\\[2ex] 
	\hline
	$r_0 $ & $f(r_0)$ & Warm & Cold & Warm & Cold\\[1ex]
	\hline
	\endhead
	\sc{0.0011} & \sc{0.0137} & 11 & 20 & 0.0868 & 0.172\\ 
	\sc{0.0012} & \sc{0.0138} & 10 & 20 & 0.0811 & 0.171\\ 
	\sc{0.0013} & \sc{0.014} & 9 & 19 & 0.0721 & 0.161\\ 
	\sc{0.0014} & \sc{0.0142} & 9 & 19 & 0.072 & 0.156\\ 
	\sc{0.0015} & \sc{0.0145} & 12 & 22 & 0.0929 & 0.181\\ 
	\sc{0.0016} & \sc{0.0147} & 11 & 18 & 0.0893 & 0.15\\ 
	\sc{0.0017} & \sc{0.015} & 13 & 19 & 0.101 & 0.16\\ 
	\sc{0.0018} & \sc{0.0153} & 11 & 20 & 0.0851 & 0.172\\ 
	\sc{0.0019} & \sc{0.0157} & 10 & 21 & 0.0769 & 0.184\\ 
	\sc{0.002} & \sc{0.016} & 11 & 21 & 0.127 & 0.179\\ 
\end{longtable}
\subsubsection{Higher-moment coherent risk measures}
Higher-moment coherent risk measures (HMCR) is the generalization of the conditional value-at-risk measure (CVaR),
$$ HMCR_{p,\alpha}(X) := \min_{\eta} \eta + (1-\alpha)^{-1}\norm{(X-\eta)^+}_p, p > 1,$$
and works effectively within the stochastic programming of problem~\eqref{portfolio-original}. Suppose we have the return matrix $R \in \mathbb{R}^{d \times n}$ recording returns of $n$ assets across $d$ consecutive days. we choose the risk measure to be the higher-moment coherent risk measures, $\mathcal{R}(X) = HMCR_{p,\alpha}(X)$, and take $d$ different days as the scenarios in stochastic programming. Then the problem~\eqref{portfolio-original} is equivalent to the following form:
\begin{align}
\begin{aligned}
	\min \quad & \eta + \frac{t}{(1-\alpha)d^{\frac{1}{p}}}\\
	s.t.  \quad & \mathbf{e}^\top x = 1, \\
		& \frac{1}{d}\sum_{j=1}^{d}\sum_{i=1}^n R_{j,i}x_i \ge r_0,\\
		& w \ge -Rx - \eta \cdot \mathbf{e}, \\
		& t \ge \left(w_1^p + \dots + w_d^p\right)^{\frac{1}{p}},\\
		& x \ge 0, w \ge 0,
\end{aligned}
\end{align}
where $w_i, i=1,\dots,d$ is the slack variable for $i$-th scenario. The expected return over $d$ scenarios should be no less than $r_0$. The $p$-th moment coherent risk measures can be formulated as a $p$-norm conic constraint, which is equivalent to $d$ power cone constraints plus a linear constraint as follows~\cite{MOSEK}:
\begin{align*}
\begin{aligned}
	\left(r_i, t, w_i\right) & \in \mathcal{K}_{\text{pow}}^{1 / p}, \ \forall i = 1, \dots, d, \\
	\sum_{i=1}^d r_i & =t.
\end{aligned}
\end{align*}

Results are shown in Figure~\ref{fig:portfolio-higher-moment-coherent-risk-measures}. We set the accuracy level to $\epsilon = 1e^{-7}$ and simulate it for $N=100$ times. The overall reduction rate for the iteration number of and solve time are $\mathcal{R}_{iter} = 0.6191$ and $\mathcal{R}_{t} = 0.6013$ respectively. $\mathcal{R}_{iter} \approx \mathcal{R}_{t}$ shows that the computational time for the smoothing operator~\eqref{smoothing-operator} is negligible within the warmstarting scheme for this portfolio optimization problem with power cones.
\begin{figure}
	\centering
	\caption{100 consecutive transactions with higher-moment coherent risk measures}
	\includegraphics[width=0.8\textwidth]{./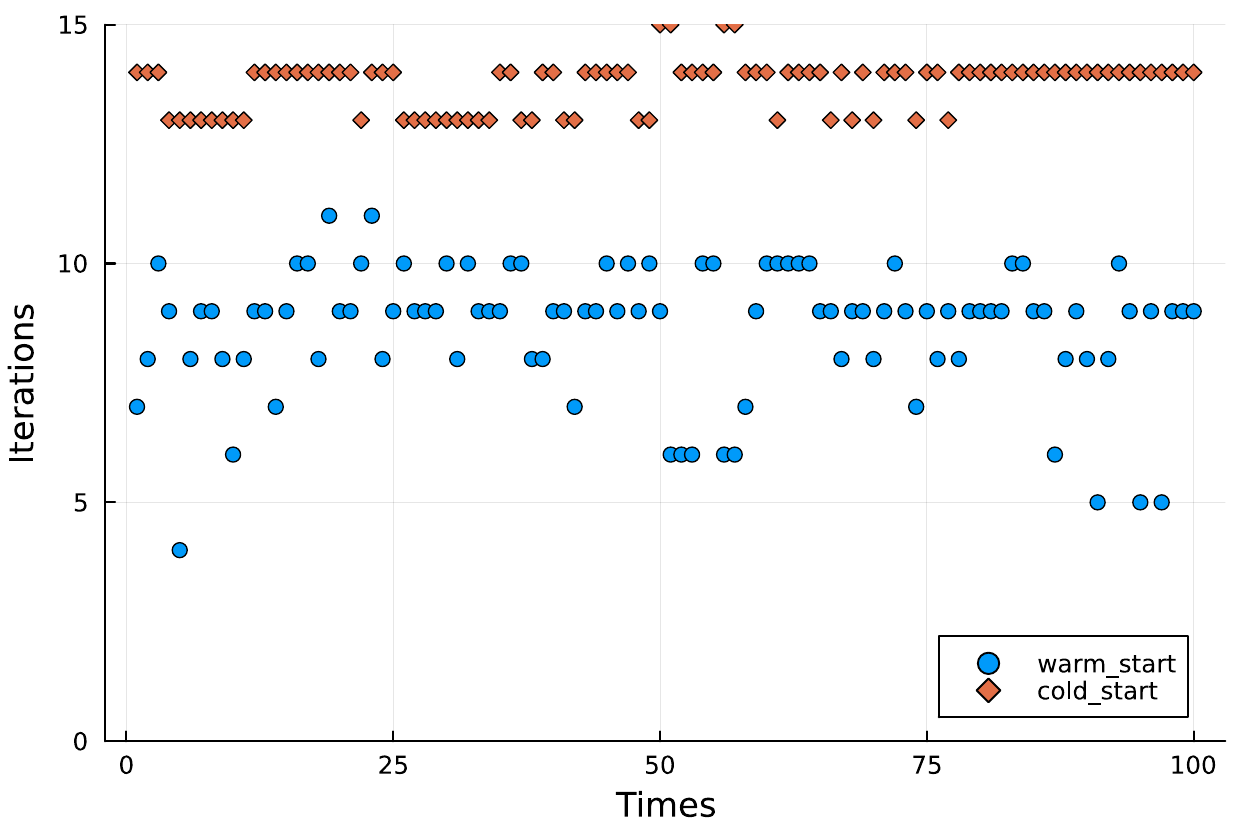}
	\label{fig:portfolio-higher-moment-coherent-risk-measures}
\end{figure}

\subsection{Perturbation effects on the warmstarting strategy}
Finally, we test how the performance of the warmstarting strategy varies with changes in the magnitude of the perturbation. We benchmark the model predictive control (MPC) problems~\cite{Borrelli:MPCbook} with quadratic objectives from the benchmark collection~ \cite{Kouzoupis:2015}, which are in the form
\begin{align}
\begin{aligned}
	\min_{y,x,u} \quad & \sum\limits_{i=0}^{N-1}
	\begin{pmatrix}
		y_i - y^r_i \\
		u_i - u^r_i
	\end{pmatrix}
	\begin{pmatrix}
		Q_k & S_k \\ S_k^T & R_k
	\end{pmatrix}
	\begin{pmatrix}
		y_i - y^r_i\\
		u_i - u^r_i
	\end{pmatrix} \hfill 
	+   \begin{pmatrix}
		g^y_k \\ g_u^k 
	\end{pmatrix}\tpose 
	\begin{pmatrix}
		y_i - y^r_i\\
		u_i - u^r_i
	\end{pmatrix} \\
	& + (x_N - x^r_N)\tpose P(x_N - x^r_N) \\
	\text{s.t} \quad &     \left.
	\begin{aligned}
		&x_{k+1} = A_k x_k + B_ku_k + f_k \\[0.5ex]
		&y_{k}   = C_k x_k + D_ku_k + e_k \\[0.5ex]
		&d_k^\ell \le M_k x_k + N_ku_k \le d_k^u \\[0.5ex]
		&u_k \in \mathcal{U}_k,~y_k \in \mathcal{Y}_k \\[0.5ex]
	\end{aligned}  \quad \right\} \quad k = 0 \dots N-1 \\
	&Tx_N \in \mathcal{\mathcal{T}},
\end{aligned}
\end{align}

where the constraint sets $\mathcal{U}_k$, $\mathcal{Y}_k$ and $\mathcal{T}$ are interval constraints and cost matrices satisfy ${Q}_k \succeq 0$, ${R}_k \succeq 0$ and $P \succ 0$.  The dimension of the states $x_k$ and inputs $u_k$ are relatively small (max 12 and 4, respectively), with horizons $N$ up to 100.  

Similar to~\cite{Skajaa2013}, we generate the perturbation $\delta$ for parameters $b,q,A$ respectively, where $10 \%$ but at most 20 elements are changed after perturbation. Given the changing parameter $v$, each entry of $v$ is modified as follows,
$$
v_i:= \begin{cases}\delta r & \text { if }\left|v_i\right| \leq 10^{-6} \\ (1+\delta r) v_i & \text { otherwise }\end{cases}
$$
where $r$ is a random number generated from the uniform distribution $[-1,1]$. Figure~\ref{fig:mpc_varying_disturbance} shows how the size of perturbation affects the reduction ratio of the proposed warmstarting strategy. The y-axis denotes the geometric mean of the number of iterations for MPC problems. The warmstarting is quite effective when the perturbation $\delta$ is small enough and the geometric mean of the iteration number is non-decreasing as the magnitude of perturbation $\delta$ grows up.  
\begin{figure}
	\centering
	\caption{Geometric mean $\mathcal{R}$ vs perturbation $\delta$}
	\includegraphics[width=0.8\textwidth]{./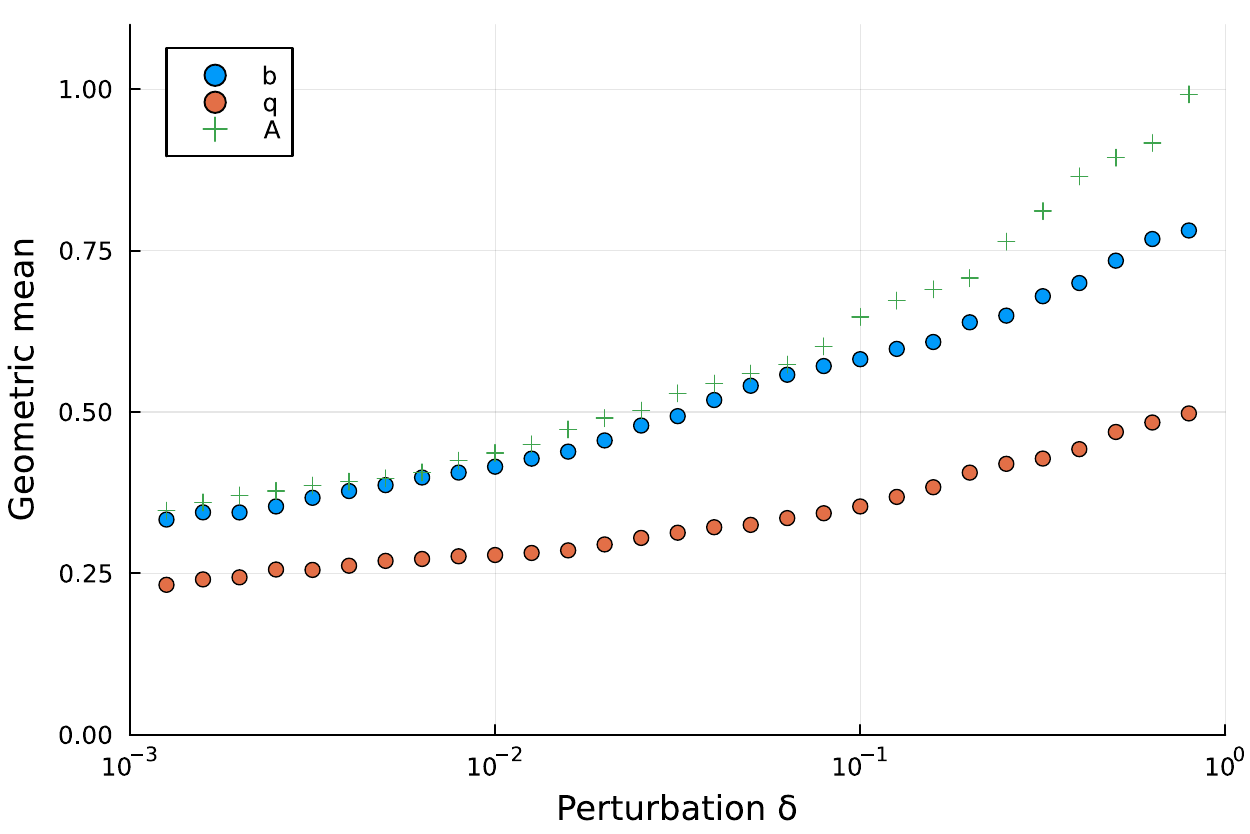}
	\label{fig:mpc_varying_disturbance}
\end{figure}

\section{Conclusion}
In this paper, we have proposed a warmstarting method that is applicable for conic optimization problems. Our warmstarting is based on a smoothing operator that can be computed in parallel for different cones. We have shown that the smoothing operator has an analytic solution for nonnegative cones, second-order cones and positive semidefinite cones, and the Newton method can find the unique solution of the smoothing operator with a locally quadratic convergence rate for any cone of a self-concordant barrier function. Compared to previous work on warmstarting, our method can generate an initial point on the central path of a primal-dual interior point method and the analysis shows that the infeasibility residuals of our initial point is at most $O(\mu)$ deviation from that of the optimum of the last problem. 

Note that nonnegative cones, second-order cones and positive semidefinite cones all belong to the class of symmetric cones. Future work can be extending the current $O(\mu)$ error analysis for general symmetric cones, or nonsymmetric cones if possible.

\clearpage

\bibliographystyle{unsrt}
\bibliography{bibfile}

@misc{Clarabel,
	title={Clarabel: An interior-point solver for conic programs with quadratic objectives}, 
	author={Paul J. Goulart and Yuwen Chen},
	year={2024},
	eprint={2405.12762},
	archivePrefix={arXiv},
	primaryClass={math.OC}
}

@misc{CuClarabel,
	title={CuClarabel: {GPU} Acceleration for a Conic Optimization Solver}, 
	author={Yuwen Chen and Danny Tse and Parth Nobel and Paul Goulart and Stephen Boyd},
	year={2024},
	eprint={2412.19027},
	archivePrefix={arXiv},
	primaryClass={math.OC},
	url={https://arxiv.org/abs/2412.19027}, 
}

@manual{MOSEK,
	author = "MOSEK ApS",
	title = "MOSEK Optimization Suite 10.0",
	year = 2023,
	url = "https://docs.mosek.com/latest/intro/index.html"
}

@article{ABIP,
	author = {Deng, Qi and Feng, Qing and Gao, Wenzhi and Ge, Dongdong and Jiang, Bo and Jiang, Yuntian and Liu, Jingsong and Liu, Tianhao and Xue, Chenyu and Ye, Yinyu and Zhang, Chuwen},
	title = {An Enhanced Alternating Direction Method of Multipliers-Based Interior Point Method for Linear and Conic Optimization},
	journal = {INFORMS Journal on Computing},
	volume = {37},
	number = {2},
	pages = {338-359},
	year = {2025},
	doi = {10.1287/ijoc.2023.0017},
}

@article{osqp,
	title = {{OSQP}: an operator splitting solver for quadratic programs},
	volume = {12},
	issn = {1867-2949, 1867-2957},
	shorttitle = {{OSQP}},
	doi = {10.1007/s12532-020-00179-2},
	abstract = {We present a general-purpose solver for convex quadratic programs based on the alternating direction method of multipliers, employing a novel operator splitting technique that requires the solution of a quasi-deﬁnite linear system with the same coefﬁcient matrix at almost every iteration. Our algorithm is very robust, placing no requirements on the problem data such as positive deﬁniteness of the objective function or linear independence of the constraint functions. It can be conﬁgured to be division-free once an initial matrix factorization is carried out, making it suitable for real-time applications in embedded systems. In addition, our technique is the ﬁrst operator splitting method for quadratic programs able to reliably detect primal and dual infeasible problems from the algorithm iterates. The method also supports factorization caching and warm starting, making it particularly efﬁcient when solving parametrized problems arising in ﬁnance, control, and machine learning. Our open-source C implementation OSQP has a small footprint, is library-free, and has been extensively tested on many problem instances from a wide variety of application areas. It is typically ten times faster than competing interior-point methods, and sometimes much more when factorization caching or warm start is used. OSQP has already shown a large impact with tens of thousands of users both in academia and in large corporations.},
	language = {en},
	number = {4},
	urldate = {2023-07-31},
	journal = {Mathematical Programming Computation},
	author = {Stellato, Bartolomeo and Banjac, Goran and Goulart, Paul and Bemporad, Alberto and Boyd, Stephen},
	month = dec,
	year = {2020},
	keywords = {OSQP},
	pages = {637--672},
}

@inproceedings{MPT3,
	title = {Multi-{Parametric} {Toolbox} 3.0},
	url = {https://ieeexplore.ieee.org/abstract/document/6669862},
	doi = {10.23919/ECC.2013.6669862},
	abstract = {The Multi-Parametric Toolbox is a collection of algorithms for modeling, control, analysis, and deployment of constrained optimal controllers developed under Matlab. It features a powerful geometric library that extends the application of the toolbox beyond optimal control to various problems arising in computational geometry. The new version 3.0 is a complete rewrite of the original toolbox with a more flexible structure that offers faster integration of new algorithms. The numerical side of the toolbox has been improved by adding interfaces to state of the art solvers and by incorporation of a new parametric solver that relies on solving linear-complementarity problems. The toolbox provides algorithms for design and implementation of real-time model predictive controllers that have been extensively tested.},
	urldate = {2024-07-10},
	booktitle = {2013 {European} {Control} {Conference} ({ECC})},
	author = {Herceg, Martin and Kvasnica, Michal and Jones, Colin N. and Morari, Manfred},
	month = jul,
	year = {2013},
	keywords = {Computational geometry, Computational modeling, Engines, Libraries, MATLAB, Optimization, Silicon},
	pages = {502--510},
	file = {IEEE Xplore Abstract Record:C\:\\Users\\ddt00\\Zotero\\storage\\7RHUU5GD\\6669862.html:text/html;Submitted Version:C\:\\Users\\ddt00\\Zotero\\storage\\EIHPLWWN\\Herceg et al. - 2013 - Multi-Parametric Toolbox 3.0.pdf:application/pdf},
}

@article{Gondzio2008,
	title = {A {New} {Unblocking} {Technique} to {Warmstart} {Interior} {Point} {Methods} {Based} on {Sensitivity} {Analysis}},
	volume = {19},
	issn = {1052-6234},
	url = {https://epubs.siam.org/doi/10.1137/060678129},
	doi = {10.1137/060678129},
	abstract = {In this paper we will discuss two variants of an inexact feasible interior point algorithm for convex quadratic programming. We will consider two different neighborhoods: a small one induced by the use of the Euclidean norm which yields a short-step algorithm and a symmetric one induced by the use of the infinity norm which yields a (practical) long-step algorithm. Both algorithms allow for the Newton equation system to be solved inexactly. For both algorithms we will provide conditions for the level of error acceptable in the Newton equation and establish the worst-case complexity results.},
	number = {3},
	journal = {SIAM Journal on Optimization},
	author = {Gondzio, Jacek and Grothey, Andreas},
	month = jan,
	year = {2008},
	note = {Publisher: Society for Industrial and Applied Mathematics},
	pages = {1184--1210},
}

@article{Skajaa2013,
	title = {Warmstarting the homogeneous and self-dual interior point method for linear and conic quadratic problems},
	volume = {5},
	issn = {1867-2957},
	url = {https://doi.org/10.1007/s12532-012-0046-z},
	doi = {10.1007/s12532-012-0046-z},
	abstract = {We present two strategies for warmstarting primal-dual interior point methods for the homogeneous self-dual model when applied to mixed linear and quadratic conic optimization problems. Common to both strategies is their use of only the final (optimal) iterate of the initial problem and their negligible computational cost. This is a major advantage when compared to previously suggested strategies that require a pool of iterates from the solution process of the initial problem. Consequently our strategies are better suited for users who use optimization algorithms as black-box routines which usually only output the final solution. Our two strategies differ in that one assumes knowledge only of the final primal solution while the other assumes the availability of both primal and dual solutions. We analyze the strategies and deduce conditions under which they result in improved theoretical worst-case complexity. We present extensive computational results showing work reductions when warmstarting compared to coldstarting in the range 30–75\% depending on the problem class and magnitude of the problem perturbation. The computational experiments thus substantiate that the warmstarting strategies are useful in practice.},
	language = {en},
	number = {1},
	journal = {Mathematical Programming Computation},
	author = {Skajaa, Anders and Andersen, Erling D. and Ye, Yinyu},
	month = mar,
	year = {2013},
	pages = {1--25},
}

@misc{Cay2017,
	title = {Warm-start of interior point methods for second order cone optimization via rounding over optimal {Jordan} frames},
	url = {https://optimization-online.org/?p=14544},
	abstract = {Interior point methods (IPM) are the most popular approaches to solve Second Order Cone Optimization (SOCO) problems, due to their theoretical polynomial complexity and practical performance. In this paper, we present a warm-start method for primal-dual IPMs to reduce the number of IPM steps needed to solve SOCO problems that appear in a Branch and Conic Cut (BCC) tree when solving Mixed Integer Second Order Cone Optimization (MISOCO) problems. Our method exploits the optimal Jordan frame of a related subproblem and provides a conic feasible primal-dual initial point for the self-dual embedding model by solving two auxiliary linear optimization problems. Numerical results on test problems in the CBLIB library show on average around 61\% reduction of the IPM iterations for a variety of MISOCO problems.},
	language = {en-US},
	publisher = {Optimization Online},
	author = {Çay, Sertalp B. and Pólik, Imre and Terlaky, Tamás},
	month = may,
	year = {2017},
	keywords = {mixed-integer programming, interior point methods, Branch-and-Bound, conic optimization, warm-start},
}

@article{Yildirim2002,
	title = {Warm-{Start} {Strategies} in {Interior}-{Point} {Methods} for {Linear} {Programming}},
	volume = {12},
	issn = {1052-6234},
	doi = {10.1137/S1052623400369235},
	abstract = {One of the main drawbacks associated with Interior Point Methods (IPMs) is the perceived lack of an efficient warmstarting scheme which would enable the use of information from a previous solution of a similar problem. Recently there has been renewed interest in the subject. A common problem with warmstarting for IPM is that an advanced starting point which is close to the boundary of the feasible region, as is typical, might lead to blocking of the search direction. Several techniques have been proposed to address this issue. Most of these aim to lead the iterate back into the interior of the feasible region—we classify them as either “modification steps” or “unblocking steps” depending on whether the modification is taking place before solving the modified problem to prevent future problems, or during the solution if and when problems become apparent. A new “unblocking” strategy is suggested which attempts to directly address the issue of blocking by performing sensitivity analysis on the Newton step with the aim of increasing the size of the step that can be taken. This analysis is used in a new technique to warmstart interior point methods: we identify components of the starting point that are responsible for blocking and aim to improve these by using our sensitivity analysis. The relative performance of a selection of different warmstarting techniques suggested in the literature and the new proposed unblocking by sensitivity analysis is evaluated on the warmstarting test set based on a selection of NETLIB problems proposed by [Benson and Shanno, Comput. Optim. Appl., 38 (2007), pp. 371–399]. Warmstarting techniques are also applied in the context of solving nonlinear programming problems as a sequence of quadratic programs solved by interior point methods. We also apply the warmstarting technique to the problem of finding the complete efficient frontier in portfolio management problems (a problem with 192 million variables—to our knowledge the largest problem to date solved by a warmstarted IPM). We find that the resulting best combined warmstarting strategy manages to save between 50 and 60\% of interior point iterations, consistently outperforming similar approaches reported in current optimization literature.},
	number = {3},
	urldate = {2024-05-02},
	journal = {SIAM Journal on Optimization},
	author = {Yildirim, E. Alper and Wright, Stephen J.},
	month = jan,
	year = {2002},
	note = {Publisher: Society for Industrial and Applied Mathematics},
	pages = {782--810},
}

@article{Gondzio2002,
	title = {Reoptimization {With} the {Primal}-{Dual} {Interior} {Point} {Method}},
	volume = {13},
	issn = {1052-6234},
	doi = {10.1137/S1052623401393141},
	abstract = {One of the main drawbacks associated with Interior Point Methods (IPMs) is the perceived lack of an efficient warmstarting scheme which would enable the use of information from a previous solution of a similar problem. Recently there has been renewed interest in the subject. A common problem with warmstarting for IPM is that an advanced starting point which is close to the boundary of the feasible region, as is typical, might lead to blocking of the search direction. Several techniques have been proposed to address this issue. Most of these aim to lead the iterate back into the interior of the feasible region—we classify them as either “modification steps” or “unblocking steps” depending on whether the modification is taking place before solving the modified problem to prevent future problems, or during the solution if and when problems become apparent. A new “unblocking” strategy is suggested which attempts to directly address the issue of blocking by performing sensitivity analysis on the Newton step with the aim of increasing the size of the step that can be taken. This analysis is used in a new technique to warmstart interior point methods: we identify components of the starting point that are responsible for blocking and aim to improve these by using our sensitivity analysis. The relative performance of a selection of different warmstarting techniques suggested in the literature and the new proposed unblocking by sensitivity analysis is evaluated on the warmstarting test set based on a selection of NETLIB problems proposed by [Benson and Shanno, Comput. Optim. Appl., 38 (2007), pp. 371–399]. Warmstarting techniques are also applied in the context of solving nonlinear programming problems as a sequence of quadratic programs solved by interior point methods. We also apply the warmstarting technique to the problem of finding the complete efficient frontier in portfolio management problems (a problem with 192 million variables—to our knowledge the largest problem to date solved by a warmstarted IPM). We find that the resulting best combined warmstarting strategy manages to save between 50 and 60\% of interior point iterations, consistently outperforming similar approaches reported in current optimization literature.},
	number = {3},
	urldate = {2024-05-02},
	journal = {SIAM Journal on Optimization},
	author = {Gondzio, Jacek and Grothey, Andreas},
	month = jan,
	year = {2002},
	note = {Publisher: Society for Industrial and Applied Mathematics},
	pages = {842--864},
}

@article{Shahzad2011,
	series = {18th {IFAC} {World} {Congress}},
	title = {A {New} {Hot}-start {Interior}-point {Method} for {Model} {Predictive} {Control}*},
	volume = {44},
	issn = {1474-6670},
	doi = {10.3182/20110828-6-IT-1002.02817},
	abstract = {In typical model predictive control applications, a finite-horizon optimal control problem, in the form of a quadratic program (QP), must be solved at each sampling instant with a known initial state. We present a new hot-start strategy to solve such QPs using interior-point methods, where the first interior-point iterate is constructed from a backward time-shifting of the solution to the QP at the previous time-step. There are two difficulties with such a strategy. First, a naive backward shifting of a previous solution can yield an initial iterate on the boundary of the primal-dual feasible region, leading to blocking of the search direction and consequently to very small and inefficient interior-point steps. Second, a backward shifted solution does not provide a set of strictly feasible terminal KKT conditions. In order to address both of these issues, we propose a modification to the basic backward-shifting method which provides simultaneously an initial iterate that satisfies strict feasibility conditions and a strictly feasible set of primal and dual terminal decision variables. Numerical results indicate that the proposed technique yields convergence in fewer iterations than a cold-start interior-point method.},
	number = {1},
	urldate = {2024-08-12},
	journal = {IFAC Proceedings Volumes},
	author = {Shahzad, Amir and Goulart, Paul J.},
	month = jan,
	year = {2011},
	keywords = {Interior-point methods, Model predictive control, MPC, Optimization problems},
	pages = {2470--2475},
}

@article{Krokhmal2007,
	title = {Higher moment coherent risk measures},
	volume = {7},
	issn = {1469-7688},
	url = {https://doi.org/10.1080/14697680701458307},
	doi = {10.1080/14697680701458307},
	abstract = {The paper considers modelling of risk-averse preferences in stochastic programming problems using risk measures. We utilize the axiomatic foundation of coherent risk measures and deviation measures in order to develop simple representations that express risk measures via specially constructed stochastic programming problems. Using the developed representations, we introduce a new family of higher-moment coherent risk measures (HMCR), which includes, as a special case, the Conditional Value-at-Risk measure. It is demonstrated that the HMCR measures are compatible with the second order stochastic dominance and utility theory, can be efficiently implemented in stochastic optimization models, and perform well in portfolio optimization case studies.},
	number = {4},
	journal = {Quantitative Finance},
	author = {Krokhmal, PAVLO A.},
	month = aug,
	year = {2007},
	keywords = {Portfolio optimization, Risk measures, Stochastic dominance, Stochastic programming},
	pages = {373--387},
}

@book{cornuejols2006,
	place={Cambridge},
	series={Mathematics, Finance and Risk},
	title={Optimization Methods in Finance},
	publisher={Cambridge University Press},
	author={Cornuejols, G. and T\"{u}t\"{u}nc\"{u}, R.},
	year={2006}, collection={Mathematics, Finance and Risk}
}

@article{Xu2009,
	title = {Robustness and {Regularization} of {Support} {Vector} {Machines}},
	volume = {10},
	issn = {1533-7928},
	abstract = {We consider regularized support vector machines (SVMs) and show that
	they are precisely equivalent to a new robust optimization
	formulation. We show that this equivalence of robust optimization
	and regularization has implications for both algorithms, and
	analysis. In terms of algorithms, the equivalence suggests more
	general SVM-like algorithms for classification that explicitly build
	in protection to noise, and at the same time control overfitting. On
	the analysis front, the equivalence of robustness and regularization
	provides a robust optimization interpretation for the success of
	regularized SVMs. We use this new robustness interpretation of SVMs
	to give a new proof of consistency of (kernelized) SVMs, thus
	establishing robustness as the reason regularized SVMs
	generalize well.},
	number = {51},
	journal = {Journal of Machine Learning Research},
	author = {Xu, Huan and Caramanis, Constantine and Mannor, Shie},
	year = {2009},
	keywords = {svm paper},
	pages = {1485--1510},
}

@INPROCEEDINGS{Kouzoupis:2015,
	author={Kouzoupis, D. and Zanelli, A. and Peyrl, H. and Ferreau, H. J.},
	booktitle={2015 European Control Conference (ECC)}, 
	title={Towards proper assessment of {QP} algorithms for embedded model predictive control}, 
	year={2015},
	volume={},
	number={},
	pages={2609-2616},
	doi={10.1109/ECC.2015.7330931}}

@book{Borrelli:MPCbook,
	title={Predictive control for linear and hybrid systems},
	author={Borrelli, Francesco and Bemporad, Alberto and Morari, Manfred},
	year={2017},
	publisher={Cambridge University Press}
}

@book{Bishop:MLbook,
	author = {Bishop, Christopher M.},
	title = {Pattern Recognition and Machine Learning (Information Science and Statistics)},
	year = {2006},
	isbn = {0387310738},
	publisher = {Springer-Verlag},
	address = {Berlin, Heidelberg}
}

@article{Nesterov1999,
	author = {Nesterov, Y. and Todd, M. J. and Ye, Y.},
	doi = {10.1007/s10107980009a},
	journal = {Mathematical Programming},
	number = {2},
	pages = {227--267},
	title = {Infeasible-start primal-dual methods and infeasibility detectors for nonlinear programming problems},
	url = {https://doi.org/10.1007/s10107980009a},
	volume = {84},
	year = {1999},
	bdsk-url-1 = {https://doi.org/10.1007/s10107980009a}
}

@article{Nesterov1997,
	ISSN = {0364765X, 15265471},
	author = {Y. E. Nesterov and M. J. Todd},
	journal = {Mathematics of Operations Research},
	number = {1},
	pages = {1--42},
	publisher = {INFORMS},
	title = {Self-Scaled Barriers and Interior-Point Methods for Convex Programming},
	urldate = {2022-11-08},
	volume = {22},
	year = {1997}
}

@book{Wright97,
	author = {Wright, Stephen J.},
	title = {Primal-Dual Interior-Point Methods},
	publisher = {Society for Industrial and Applied Mathematics},
	year = {1997},
	doi = {10.1137/1.9781611971453},
	address = {},
	edition   = {},
	URL = {https://epubs.siam.org/doi/abs/10.1137/1.9781611971453},
	eprint = {https://epubs.siam.org/doi/pdf/10.1137/1.9781611971453}
}

@article{Dahl21,
	author = {Dahl, Joachim and Andersen, Erling D.},
	title = {A primal-dual interior-point algorithm for nonsymmetric exponential-cone optimization},
	journal = {Mathematical Programming},
	year = {2021},
	doi = {10.1007/s10107-021-01631-4},
}

@manual{cvxopt,
	author = "Lieven Vandenberghe",
	title = "The CVXOPT linear and quadratic cone program solvers",
	year = 2010,
}

@book{Boyd94,
	author = {Boyd, Stephen and El Ghaoui, Laurent and Feron, Eric and Balakrishnan, Venkataramanan},
	title = {Linear Matrix Inequalities in System and Control Theory},
	publisher = {Society for Industrial and Applied Mathematics},
	year = {1994},
	address = {Philadelphia},
	isbn = {978-1-61197-077-7},
}

@inproceedings{Ali2017,
	address = {Sydney, NSW, Australia},
	series = {{ICML}'17},
	title = {A semismooth {Newton} method for fast, generic convex programming},
	abstract = {We introduce Newton-ADMM, a method for fast conic optimization. The basic idea is to view the residuals of consecutive iterates generated by the alternating direction method of multipliers (ADMM) as a set of fixed point equations, and then use a nonsmooth Newton method to find a solution; we apply the basic idea to the Splitting Cone Solver (SCS), a state-of-the-art method for solving generic conic optimization problems. We demonstrate theoretically, by extending the theory of semismooth operators, that Newton-ADMM converges rapidly (i.e., quadratically) to a solution; empirically, Newton-ADMM is significantly faster than SCS on a number of problems. The method also has essentially no tuning parameters, generates certificates of primal or dual infeasibility, when appropriate, and can be specialized to solve specific convex problems.},
	urldate = {2024-08-14},
	booktitle = {Proceedings of the 34th {International} {Conference} on {Machine} {Learning} - {Volume} 70},
	publisher = {JMLR.org},
	author = {Ali, Alnur and Wong, Eric and Kolter, J. Zico},
	month = aug,
	year = {2017},
	pages = {70--79},
}

@article{Li2018,
	title = {{QSDPNAL}: a two-phase augmented {Lagrangian} method for convex quadratic semidefinite programming},
	volume = {10},
	issn = {1867-2957},
	shorttitle = {{QSDPNAL}},
	url = {https://doi.org/10.1007/s12532-018-0137-6},
	doi = {10.1007/s12532-018-0137-6},
	abstract = {In this paper, we present a two-phase augmented Lagrangian method, called QSDPNAL, for solving convex quadratic semidefinite programming (QSDP) problems with constraints consisting of a large number of linear equality and inequality constraints, a simple convex polyhedral set constraint, and a positive semidefinite cone constraint. A first order algorithm which relies on the inexact Schur complement based decomposition technique is developed in QSDPNAL-Phase I with the aim of solving a QSDP problem to moderate accuracy or using it to generate a reasonably good initial point for the second phase. In QSDPNAL-Phase II, we design an augmented Lagrangian method (ALM) wherein the inner subproblem in each iteration is solved via inexact semismooth Newton based algorithms. Simple and implementable stopping criteria are designed for the ALM. Moreover, under mild conditions, we are able to establish the rate of convergence of the proposed algorithm and prove the R-(super)linear convergence of the KKT residual. In the implementation of QSDPNAL, we also develop efficient techniques for solving large scale linear systems of equations under certain subspace constraints. More specifically, simpler and yet better conditioned linear systems are carefully designed to replace the original linear systems and novel shadow sequences are constructed to alleviate the numerical difficulties brought about by the crucial subspace constraints. Extensive numerical results for various large scale QSDPs show that our two-phase algorithm is highly efficient and robust in obtaining accurate solutions. The software reviewed as part of this submission was given the DOI (Digital Object Identifier) https://doi.org/10.5281/zenodo.1206980.},
	language = {en},
	number = {4},
	urldate = {2024-08-14},
	journal = {Mathematical Programming Computation},
	author = {Li, Xudong and Sun, Defeng and Toh, Kim-Chuan},
	month = dec,
	year = {2018},
	keywords = {65F10, 90C06, 90C20, 90C22, 90C25, Augmented Lagrangian, Inexact semismooth Newton method, Quadratic semidefinite programming, Schur complement},
	pages = {703--743},
}

@article{Parikh2014,
	title = {Proximal {Algorithms}},
	volume = {1},
	issn = {2167-3888},
	url = {https://doi.org/10.1561/2400000003},
	doi = {10.1561/2400000003},
	abstract = {This monograph is about a class of optimization algorithms called proximal algorithms. Much like Newton's method is a standard tool for solving unconstrained smooth optimization problems of modest size, proximal algorithms can be viewed as an analogous tool for nonsmooth, constrained, large-scale, or distributed versions of these problems. They are very generally applicable, but are especially well-suited to problems of substantial recent interest involving large or high-dimensional datasets. Proximal methods sit at a higher level of abstraction than classical algorithms like Newton's method: the base operation is evaluating the proximal operator of a function, which itself involves solving a small convex optimization problem. These subproblems, which generalize the problem of projecting a point onto a convex set, often admit closed-form solutions or can be solved very quickly with standard or simple specialized methods. Here, we discuss the many different interpretations of proximal operators and algorithms, describe their connections to many other topics in optimization and applied mathematics, survey some popular algorithms, and provide a large number of examples of proximal operators that commonly arise in practice.},
	number = {3},
	journal = {Found. Trends Optim.},
	author = {Parikh, Neal and Boyd, Stephen},
	month = jan,
	year = {2014},
	pages = {127--239},
}

@article{COSMO,
	title={{COSMO}: A Conic Operator Splitting Method for Convex Conic Problems},
	author={Garstka, Michael and Cannon, Mark and Goulart, Paul},
	journal={Journal of Optimization Theory and Applications},
	volume={190},
	number={3},
	pages={779--810},
	year={2021},
	publisher={Springer}
}

@ARTICLE{Chen23a,
	author={Chen, Yuwen and Ning, Catherine and Goulart, Paul},
	journal={IEEE Control Systems Letters}, 
	title={A Unified Early Termination Technique for Primal-Dual Algorithms in Mixed Integer Conic Programming}, 
	year={2023},
	volume={7},
	number={},
	pages={2803-2808},
	doi={10.1109/LCSYS.2023.3289062}}

@article{SCS,
	author = {O'{D}onoghue, Brendan and Chu, Eric and Parikh, Neal and Boyd, Stephen},
	title = {Conic Optimization via Operator Splitting and Homogeneous Self-Dual Embedding},
	journal = {Journal of Optimization Theory and Applications},
	year = {2016},
	volume = {169},
	number = {3},
	pages = {1042–1068},
	doi = {10.1007/s10957-016-0892-3},
}

@book{Nesterov1994,
	title = {Interior-{Point} {Polynomial} {Algorithms} in {Convex} {Programming}},
	isbn = {978-0-89871-319-0 978-1-61197-079-1},
	language = {en},
	publisher = {Society for Industrial and Applied Mathematics},
	author = {Nesterov, Yurii and Nemirovskii, Arkadii},
	month = jan,
	year = {1994},
	doi = {10.1137/1.9781611970791},
}

@book{bv_2004, place={Cambridge}, title={Convex Optimization}, publisher={Cambridge University Press}, author={Boyd, Stephen and Vandenberghe, Lieven}, year={2004}}

\appendix

\section{Barrier functions for a class of cones}\label{appendix:barrier-function}
We list barrier functions for cones that are commonly supported in state-of-the-art conic optimization solvers~\cite{MOSEK,Clarabel,CuClarabel}:
\begin{itemize}
	\item
	The \emph{nonnegative cone}, which is used to formulate linear inequality constraints, is defined as
	$$
	\Re_{+}^n:=\left\{x \in \Re^n \ \middle| \ x_i \geq 0, \ \forall i=1, \ldots, n\right\},
	$$
	with its LHSCB function of degree $n$,
	$$
	f(x) = -\sum_{i \in \llbracket n \rrbracket}\log(x_i), \ x \in \interior(\mathcal{K}_{\ge}^n).
	$$
	\item 
	The \emph{second-order cone} $\mathcal{K}_{\text {soc }}^n$ (also sometimes called the \emph{quadratic} or \emph{Lorentz cone}), is defined as
	$$
	\mathcal{K}_{\mathrm{soc}}^n:=\left\{(t, x) \ \middle| \ x \in \Re^{n-1}, t \in \Re_{+},\|x\|_2 \leq t\right\},
	$$
	with the LHSCB function of degree $1$, 
	$$
	f(x) = -\frac{1}{2}\log\left(x_1^2 - \sum_{i =2}^n x_i^2\right), \ x \in \interior(\mathcal{K}_{q}^n).
	$$
	\item The \textit{positive semidefinite cone} $\mathcal{K}_{\succeq}^n$ is defined as:
	$$
	\mathcal{K}_{\succeq}^n := \set{x \in \mathbb{R}^{n(n+1)/2}}{\textrm{mat}(x) \succeq 0},
	$$
	with the LHSCB function of degree $n$,
	$$
	f(x) = - \log \det \left(\mat(x)\right), \ \mat(x) \in \interior(\mathcal{K}_{\succeq}^n).
	$$
	\item
	The \emph{exponential cone} is a $3$-dimensional cone defined as
	$$
	\mathcal{K}_{\exp }:=\left\{x \in \mathbb{R}^3 \ \middle| \ x_2\ge 0, x_2 \exp \left(\frac{x_3}{x_2}\right) \leq x_1\right\} \cup\{(x_1, 0, x_3) \mid x_1 \geq 0,x_3 \leq 0 \},
	$$
	with the LHSCB function of degree $3$,
	$$
	f(x)=-\log \left(x_2 \log \left(x_1 / x_2\right)-x_3\right)-\log x_1-\log x_2,\ x \in \interior (\mathcal{K}_{\mathrm{exp}}).
	$$
	\item
	The $3$-dimensional \emph{power cone} with exponent $\alpha \in (0,1)$ is defined as
	$$
	\mathcal{K}_{\text{pow},\alpha} = \left\{x \in \mathbb{R}^3 \ \middle| \ x_1^\alpha x_2^{1-\alpha} \geq \vert x_3 \vert, x_1,x_2 \geq 0\right\},
	$$
	with the LHSCB function of degree $3$,
	\begin{small}
		\begin{equation}
			f(x) = -\log\left({x_1}^{2\alpha} {x_2}^{2(1-\alpha)} - x_3^2\right) - (1-\alpha)\log(x_1) - \alpha\log(x_2), \ x \in \interior (\mathcal{K}_{\mathrm{pow}}).\label{pow-barrier}
		\end{equation}
	\end{small}
\end{itemize}

\section{Computation for smoothing operators over cones}\label{appendix:smoothing-operator}
\subsubsection*{Nonnegative cones}
For nonnegative cones $\mathcal{K} = \mathbb{R}^n_{\ge 0}$ that generalize linear inequality constraints, the LHSCB function is $f(s) = -\sum_{i=1}^{n}\log(s_i)$ and the 1st-order optimality condition of the inner problem of~\eqref{smoothing-operator} is
\begin{align*}
	s_i - c_i - \frac{\mu}{s_i} = 0, \ \forall i \in \indexset{n},
\end{align*}
which yields
\begin{align}
	s_i = \frac{c_i + \sqrt{c_i^2 + 4 \mu}}{2}
\end{align}
since $s_i > 0, \forall i \in \indexset{n}$. 
\subsubsection*{Second-order cones}\label{subsec-socp}
The LHSCB function $f(\cdot)$ for a second-order cone is
\begin{align*}
	f(s) = -\frac{1}{2}\log\left(s_0^2 - \norm{s_1}^2\right), \ \forall (s_0,s_1) \in \interior\mathcal{K}_{\text{soc}}^n.
\end{align*}
Suppose we denote $t_s:= s_0^2 - \norm{s_1}^2$, the 1st-order optimality condition of~\eqref{smoothing-operator} is
\begin{align}
\begin{aligned}
	-\frac{\mu}{t_s}\begin{bmatrix}
		s_0 \\ -s_1
	\end{bmatrix}
	+ \begin{bmatrix}
		s_0 \\ s_1
	\end{bmatrix}
	- \begin{bmatrix}
		c_0 \\ c_1
	\end{bmatrix} = 0.
\end{aligned}\label{1st-order-optimality-soc}
\end{align}
We compute $(s_0,s_1)$ in two different ways depending on the value of $c_0$. 
\begin{itemize}
	\item If $c_0 = 0$, the first equality of~\eqref{1st-order-optimality-soc} yields $\mu = t_s = s_0^2 - \norm{s_1}^2$ and the second equality becomes $s_1 = c_1/2$. Therefore, we can compute $s_0$ via 
	$$
	s_0 = \sqrt{\mu + \norm{s_1}^2} = \sqrt{\mu + \norm{c_1}^2/4}.
	$$
	
	\item When $c_0 \neq 0$, the optimality condition~\eqref{1st-order-optimality-soc} implies
	\begin{align*}
		s_0 = \frac{t_s}{t_s-\mu}c_0, \quad s_1 = \frac{t_s}{t_s+\mu}c_1,
	\end{align*}
	and we have the equation
	\begin{align*}
		t_s = \left(\frac{t_s}{t_s-\mu}\right)^2c_0^2 - \left(\frac{t_s}{t_s+\mu}\right)^2\norm{c_1}^2,
	\end{align*}
	which reduces to
	\begin{align*}
		(t_s^2 - \mu^2)^2 = t_s(t_s+\mu)^2 c_0^2 - t_s(t_s-\mu)^2\norm{c_1}^2.
	\end{align*}
	We define $\rho = t_s/\mu$ and the equation above becomes
	\begin{align*}
		(\rho^2-1)^2 = \frac{\rho}{\mu}(\rho+1)^2c_0^2 - \frac{\rho}{\mu}(\rho-1)^2\norm{c_1}^2,\\
		\rho^4 - 2\rho^2+1 = \frac{c_0^2-\norm{c_1}^2}{\mu}\rho^3 + \frac{2(c_0^2+\norm{c_1}^2)}{\mu}\rho^2 + \frac{c_0^2-\norm{c_1}^2}{\mu}\rho,\\
		\rho^2 + \frac{1}{\rho^2} - \frac{c_0^2-\norm{c_1}^2}{\mu}\left(\rho + \frac{1}{\rho}\right) - \frac{2(c_0^2+\norm{c_1}^2)}{\mu} - 2 = 0,
	\end{align*}
	which reduces to
	\begin{align}
		\gamma^2 - \frac{c_0^2-\norm{c_1}^2}{\mu}\gamma - \frac{2(c_0^2+\norm{c_1}^2)}{\mu} - 4 = 0
	\end{align}
	where we define $\gamma := \rho + 1/\rho > 2$, and we can obtain
	\begin{align*}
		\gamma = \frac{\frac{c_0^2-\norm{c_1}^2}{\mu} + \sqrt{\left(\frac{c_0^2-\norm{c_1}^2}{\mu}\right)^2 + \frac{8(c_0^2+\norm{c_1}^2)}{\mu}+16}}{2}.
	\end{align*}
	Then, we can obtain the value of $\rho$ by
	\begin{align*}
		\rho = \left\{
		\begin{matrix}
			\frac{\gamma+\sqrt{\gamma^2-4}}{2}, & c_0 > 0,\\
			\frac{\gamma -\sqrt{\gamma^2-4}}{2}, & c_0 < 0,
		\end{matrix}
		\right..
	\end{align*}
	$s_0,s_1$ can be computed via
	\begin{align*}
		s_0 = \frac{\rho}{\rho-1}c_0, \quad s_1 = \frac{\rho}{\rho+1}c_1.
	\end{align*} 
\end{itemize}
\subsubsection*{Positive semidefinite cones}
For positive semidefinite (PSD) cones where the LHSCB function is
\begin{align*}
	f(S) = - \log \det (S), \ \forall S \in \mathbb{S}^n_+,
\end{align*}
where $S = \mat(s)$. The optimality condition of~\eqref{smoothing-operator} becomes, given $C = \mat(c)$,
\begin{align*}
	S - C - \mu (S)^{-1} = 0.
\end{align*}
Following the same derivation for the proximal operator of semidefinite cones from~\cite{ABIP}, we can obtain $S$ by $S = Q^\top E Q$, where $Q$ is from the eigenvalue decomposition of $C = Q^\top D Q$ and $E$ is a diagonal matrix that
\begin{align}
	E = \diag(e), \ e_i = \frac{d_i + \sqrt{d_i^2 + 4\mu}}{2}, \forall i \in \indexset{n}, \label{psd-solution}
\end{align}
where $d_i$ is the eigenvalue of $C$ in the diagonal matrix $D = \diag(d)$.
\end{document}